\renewcommand{\phi}{\varphi}
\renewcommand{\epsilon}{\varepsilon}
\renewcommand{\theta}{\vartheta}
\def\AAA{{\mathbf A}}
\def\QQ{{\mathbf Q}}
\def\cH{\mathcal{H}}
\def\cI{\mathcal{I}}
\def\cJ{\mathcal{J}}
\def\cO{\mathcal{O}}
\def\cT{\mathcal{T}}
\def\cM{\mathcal{M}}
\def\fra{\mathfrak{a}}
\def\frb{\mathfrak{b}}
\def\frc{\mathfrak{c}}
\def\frm{\mathfrak{m}}
\def\frp{\mathfrak{p}}
\def\frq{\mathfrak{q}}
\def\Z{{\mathbf Z}}
\def\A{{\mathbf A}}
\def\Q{{\mathbf Q}}
\def\O{\mathcal{O}}
 \DeclareMathOperator{\Spec}{Spec}
 \DeclareMathOperator{\lct}{lct}
 \DeclareMathOperator{\ord}{ord}
\DeclareMathOperator{\mult} {mult}
\DeclareMathOperator{\Div} {div}
\def\.{\cdot}
\def\~{\widetilde}
\def\^{\widehat}
\def\o{\circ}
\def\rat{\dashrightarrow}
\newcommand{\llbracket}{[\negthinspace[}
\newcommand{\rrbracket}{]\negthinspace]}
\newtheorem{lemma}{Lemma}[section]
\newtheorem{theorem}[lemma]{Theorem}
\newtheorem{corollary}[lemma]{Corollary}
\newtheorem{proposition}[lemma]{Proposition}
\theoremstyle{definition}
\newtheorem{definition}[lemma]{Definition}
\newtheorem{remark}[lemma]{Remark}
\theoremstyle{remark}
\newtheorem*{remark*}{Remark}
\newtheorem*{note*}{Note}
\title{Shokurov's ACC Conjecture
for log canonical thresholds on smooth varieties}
\author[T.~de Fernex]{Tommaso de Fernex}
\address{Department of Mathematics, University of Utah, 155 South 1400 East,
Salt Lake City, UT 48112-0090, USA} \email{{\tt
defernex@math.utah.edu}}
\author[L. Ein]{Lawrence~Ein}
\address{Department of Mathematics, University of
Illinois at Chicago, 851 South Morgan Street (M/C 249),
Chicago, IL 60607-7045, USA}
\email{{\tt ein@math.uic.edu}}
\author[M. Musta\c{t}\u{a}]{Mircea~Musta\c{t}\u{a}}
\address{Department of Mathematics, University of Michigan,
Ann Arbor, MI 48109, USA}
\email{{\tt mmustata@umich.edu}}
\thanks{2000\,\emph{Mathematics Subject Classification}.
 Primary 14E15; Secondary 14B05, 14E30.
\newline
The first author was partially supported by NSF
CAREER grant DMS-0847059,
the second author
  was partially supported by NSF grant DMS-0700774,
  and the third author was partially supported by
  NSF grant DMS-0758454, and
  by a Packard Fellowship}
\keywords{Log canonical threshold, ascending chain condition,
inversion of adjunction, $\frm$-adic approximation, connectedness theorem}
\begin{document}

\begin{abstract}
Shokurov conjectured that the set of all log canonical thresholds
on varieties of bounded dimension satisfies the ascending chain condition.
In this paper we prove that the conjecture holds for log canonical thresholds
on smooth varieties and, more generally, on locally complete intersection varieties
and on varieties with quotient singularities.
\end{abstract}

\maketitle

\markboth{T. DE FERNEX, L.~EIN AND M.~MUSTA\c{T}\u{A}}
{LOG CANONICAL THRESHOLDS ON SMOOTH VARIETIES}

\section{Introduction}

Let $k$ be an algebraically closed field of characteristic zero.
Log canonical varieties
are varieties with mild singularities that provide the most general context
for the Minimal Model Program. More generally, one considers the log canonicity condition
on pairs $(X,\fra^t)$, where $\fra$ is a proper ideal sheaf on $X$
(most of the times, it is the ideal
of an effective Cartier divisor), and $t$ is a nonnegative real number.
Given a log canonical variety $X$ over $k$, and a proper nonzero ideal sheaf $\fra$ on $X$,
one defines the {\it log canonical threshold} $\lct(\fra)$ of the pair $(X,\fra)$.
This is the largest number $t$ such that the pair $(X,\fra^t)$ is log canonical.
One makes the convention
$\lct(0) = 0$ and $\lct(\O_X) = \infty$.
The log canonical threshold
is a fundamental invariant in birational geometry, see for example \cite{Kol2},
\cite{EM2}, or Chapter~9 in \cite{positivity}.

Shokurov's ACC Conjecture \cite{Sho} says that the set of all log canonical thresholds
on varieties of any fixed dimension
satisfies the ascending chain condition, that is, it contains no
infinite strictly increasing sequences. This conjecture attracted considerable interest
due to its implications to the Termination of Flips Conjecture
(see \cite{Birkar} for a result in this direction).
The first unconditional results on sequences of log canonical thresholds
on smooth varieties of arbitrary
dimension have been obtained in \cite{dFM}, and they were
subsequently reproved and strengthened in \cite{Kol1}.

The main goal of this paper is to prove Shokurov's ACC Conjecture for
log canonical thresholds on smooth varieties and, more generally,
on varieties that are locally complete intersection (l.c.i. for short).
Our first result deals with the smooth case.

\begin{theorem}
\label{thm:intro:T_n^sm}
For every $n$, the set
$$
\cT_n^{\rm sm} :=
\{\lct(\fra)\mid \text{$X$ is smooth, $\dim X = n$, $\fra\subsetneq\cO_X$} \}
$$
of log canonical thresholds on smooth varieties of dimension $n$
satisfies the ascending chain condition.
\end{theorem}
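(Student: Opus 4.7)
I would argue by induction on $n$. The base case $n=1$ is immediate since $\cT_1^{\rm sm} \subseteq \{0\} \cup \{1/m \mid m \in \NN,\, m \geq 1\}$. For $n \geq 2$, assume the statement in dimension $n-1$ and suppose for contradiction that there is a strictly increasing sequence $c_i = \lct(\fra_i) \in \cT_n^{\rm sm}$, with $\fra_i \subsetneq \cO_{X_i}$ on a smooth $n$-dimensional $X_i$, converging to some $c$. The overall strategy is to manufacture a single ``limit ideal'' $\fra_\infty$ on a smooth $n$-dimensional formal scheme with $\lct(\fra_\infty) = c$ attained, and then use inversion of adjunction to cut this configuration by a smooth hypersurface, descending the problem to dimension $n-1$ and producing a strictly increasing sequence in $\cT_{n-1}^{\rm sm}$ with supremum $c$, contradicting the inductive hypothesis.

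The first step is to localize and complete: choose $x_i \in X_i$ computing $c_i$ and replace $X_i$ by $\Spec R$ with $R = k[[x_1,\dots,x_n]]$ and maximal ideal $\frm$, so that $\fra_i \subset R$. The central technical ingredient is an $\frm$-adic approximation principle for LCT: the value $\lct(\fra)$ is determined by the truncation $\fra + \frm^N$ once $N$ is sufficiently large, together with a compatible semicontinuity statement under $\frm$-adic convergence. Using this, and the fact that, for fixed $N$, ideals in $R/\frm^N$ form a finite-type parameter scheme over $k$, a diagonal subsequence argument produces a generic limit $\fra_\infty \subset R_K := K[[x_1,\dots,x_n]]$ over a suitable algebraically closed extension $K/k$, such that for every $N$, $\fra_\infty \bmod \frm^N R_K$ agrees with $\fra_i \bmod \frm^N$ for infinitely many $i$. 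The approximation principle then forces $\lct(\fra_\infty) = c$, attained.

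Now $(\Spec R_K, \fra_\infty^c)$ is a strictly log canonical pair on a smooth $n$-fold, so there is a non-empty set of log canonical places whose centers, by the connectedness theorem, form a connected subset of $\Spec R_K$ containing the closed point. After a tie-breaking perturbation (adding a small multiple of a sufficiently general element) to isolate a unique minimal lc center $Z$, cut by a general smooth hypersurface $H \subset \Spec R_K$ containing $Z$; inversion of adjunction then gives that $\fra_\infty \cdot \cO_H$ has LCT exactly $c$ on the smooth $(n-1)$-fold $H$. Performing the analogous cut compatibly for the approximants $\fra_i$, which is possible for $i$ large by $\frm$-adic closeness, produces smooth $(n-1)$-folds $H_i$ and ideals $\fra_i \cdot \cO_{H_i}$ with $\lct(\fra_i \cdot \cO_{H_i}) = c_i$. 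This yields the sought strictly increasing sequence in $\cT_{n-1}^{\rm sm}$ with supremum $c$, contradicting the inductive hypothesis.

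The main obstacle is the $\frm$-adic approximation principle with sufficient uniformity: while a weak upper semicontinuity of LCT in flat families follows from resolution of singularities, establishing that truncation mod $\frm^N$ captures $\lct(\fra)$ with controlled dependence on the data, and that the LCT of a generic limit equals the limit of the approximant LCTs, requires finer bounds, likely obtained via jet-scheme computations combined with effective Artin--Rees type estimates. A secondary difficulty is carrying out the inversion-of-adjunction descent coherently for $\fra_\infty$ and almost all $\fra_i$ simultaneously, so that the descended sequence in dimension $n-1$ inherits strict monotonicity rather than collapsing to an eventually constant sequence.
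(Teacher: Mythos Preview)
Your induction-and-descent strategy has a genuine gap at the inversion of adjunction step. You want a smooth hypersurface $H$ through the minimal lc center $Z$ of $(\Spec R_K,\fra_\infty^c)$ with $\lct_H(\fra_\infty\vert_H)=c$. But if $Z$ is the closed point---and nothing in your outline rules this out a priori---then any $H$ through the origin contains $Z$, and adding $H$ to a pair that is already strictly lc at $Z$ makes it non-lc there: $(\Spec R_K, H+\fra_\infty^c)$ is \emph{not} log canonical near the origin, so inversion of adjunction yields $\lct_H(\fra_\infty\vert_H)<c$, not equality. The Bertini-type argument that gives equality requires $H$ to avoid all lc centers, which is impossible when the center is the closed point. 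So the descent to dimension $n-1$ fails exactly in the case that matters. The further claim that cutting the approximants by compatible $H_i$ gives $\lct(\fra_i\vert_{H_i})=c_i$ on the nose, with strict monotonicity surviving, is likewise unjustified: restriction only gives $\le c_i$.

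The paper does not argue by induction on $n$. After building the generic limit $\fra$ with $\lct(\fra)=c$, it first multiplies by a suitable power of the maximal ideal, replacing $\fra$ by $\fra^s\cdot\frm_K^r$ (for a carefully chosen rational $r/s$) precisely to \emph{force} the computing divisor $E$ to have center equal to the closed point. The decisive ingredient is then an effective $\frm$-adic semicontinuity theorem, proved via the Shokurov--Koll\'ar Connectedness Theorem rather than jet schemes or Artin--Rees: if $E$ computes $\lct(\fra)$ and $\frb$ agrees with $\fra$ modulo $\frq=\{h:\ord_E(h)>\ord_E(\fra)\}$, then $\lct(\frb)=\lct(\fra)$. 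Spreading $E$ out over the family underlying the generic limit, one finds that for infinitely many $i$ the equality $\lct(\fra_i^s\cdot\frm^r)=\lct(\fra^s\cdot\frm_K^r)=c/s$ holds exactly. Since trivially $\lct(\fra_i^s\cdot\frm^r)\le c_i/s<c/s$ for a strictly increasing sequence, this is an immediate contradiction. The missing idea in your plan is not a dimension-descent mechanism but this exact-equality semicontinuity statement; once you have it, ACC follows directly with no induction. (Incidentally, the case where the limit lc center \emph{is} positive-dimensional is what the paper exploits, via localization rather than hypersurface cuts, to identify the accumulation points of $\cT_n^{\rm sm}$ as $\cT_{n-1}^{\rm sm}$---but only \emph{after} the above argument has already excluded increasing sequences.)
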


As we will see, every log canonical threshold on a
variety with quotient singularities can be written as a
log canonical threshold on a smooth variety
of the same dimension. Therefore for every $n$ the set
$$
\cT_n^{\rm quot} :=
\{\lct(\fra)\mid \text{$X$ has quotient singularities,
$\dim X = n$, $\fra\subsetneq\cO_X$} \}
$$
is equal to $\cT_n^{\rm sm}$, and thus
the ascending chain condition also holds for
log canonical thresholds on varieties with quotient singularities.

In order to extend the result to log canonical thresholds on
locally complete intersection varieties,
we consider a more general version of log canonical thresholds.
Given a variety $X$ and an ideal sheaf
$\frb$ on $X$ such that the pair $(X,\frb)$ is log canonical, for every nonzero ideal
sheaf $\fra\subsetneq\cO_X$ we define the \emph{mixed log canonical
threshold} $\lct_{(X,\frb)}(\fra)$ to be the largest number $c$ such that the pair
$(X,\frb\cdot\fra^c)$ is log canonical. Note that when $\frb=\cO_X$, this is nothing but $\lct(\fra)$.
Again, one sets $\lct_{(X,\frb)}(0) = 0$ and $\lct_{(X,\frb)}(\cO_X)=\infty$.
The following is our main result.

\begin{theorem}
\label{thm:intro:M_n^lci}
For every $n$, the set
$$
\cM_n^{\rm l.c.i.} :=
\{\lct_{(X,\frb)}(\fra)\mid \text{$X$ is l.c.i., $\dim X = n$,
$\fra,\frb\subseteq\cO_X$, $\fra \ne \cO_X$, $(X,\frb)$ log canonical\,} \}
$$
of mixed log canonical thresholds on l.c.i. varieties of dimension $n$
satisfies the ascending chain condition.
\end{theorem}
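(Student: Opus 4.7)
My plan is to reduce the l.c.i.\ case to the smooth case of Theorem~\ref{thm:intro:T_n^sm} by means of inversion of adjunction for complete intersection subvarieties, using $\frm$-adic approximation to control the embedding codimension that enters the reduction. Since log canonical thresholds are local invariants, I may work with a germ $(X,x)$ at a closed point where the threshold is computed. Because $X$ is l.c.i.\ of dimension $n$, on an \'etale or formal neighborhood of $x$ there is an embedding $X = V(f_1,\dots,f_c) \hookrightarrow Y$ into a smooth variety $Y$ of dimension $n+c$, with $f_1,\dots,f_c$ a regular sequence generating $\frq := I_X \subset \O_Y$. Let $\widetilde{\fra}, \widetilde{\frb} \subset \O_Y$ be lifts of $\fra, \frb$ under the surjection $\O_Y \twoheadrightarrow \O_X$.

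\textbf{Inversion of adjunction.} I will next invoke a version of inversion of adjunction for complete intersection subvarieties---obtained via the connectedness theorem (one of the paper's listed keywords)---to produce an identity of the form
\[
\lct_{(X,\frb)}(\fra) \;=\; \lct_{(Y,\, \frq \cdot \widetilde{\frb})}(\widetilde{\fra}),
\]
both sides computed near $x$. This realises every element of $\cM_n^{\rm l.c.i.}$ as a mixed log canonical threshold on a smooth variety, at the cost of raising the ambient dimension to $n+c$ and carrying an additional factor of $\frq$ in the log-canonical part of the pair.

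\textbf{Reduction to Theorem~\ref{thm:intro:T_n^sm}.} To apply Theorem~\ref{thm:intro:T_n^sm}, the mixed threshold produced above must be re-encoded as a plain threshold on a smooth variety of bounded dimension. The $\frm$-adic approximation results assert that, for any fixed upper bound $T$, log canonical thresholds not exceeding $T$ are unchanged if $\fra,\frb$, and the defining equations $f_i$ are replaced by their truncations modulo a sufficiently high power of $\frm_{Y,x}$, the required power depending only on $T$. Along any putative infinite strictly ascending sequence in $\cM_n^{\rm l.c.i.}$ with bounded supremum, this reduces all input data to polynomial ideals of bounded degree, hence bounded number of generators; in particular $c$ and the number of generators of $\frq\cdot\widetilde{\frb}$ can be kept in a fixed range. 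A mixed threshold $\lct_{(Y,\frc)}(\fra)$ on a smooth $Y$ with $\frc=(h_1,\dots,h_m)$ can then be reinterpreted as a plain threshold on $Y\times\AAA^m$ involving the principal ideal $(\sum_i t_i h_i)$ via the standard generic-combination trick. Applying Theorem~\ref{thm:intro:T_n^sm} to the resulting smooth variety---now of dimension bounded by a function of $n$ and $T$---contradicts the strict ascent and yields ACC for $\cM_n^{\rm l.c.i.}$. The principal obstacle, as this outline makes clear, is precisely the control of the embedding codimension $c$: a priori, l.c.i.\ germs of dimension $n$ can have arbitrarily large embedding dimension, and the entire argument rests on making the $\frm$-adic approximation effective enough to bound $c$ uniformly along any ascending sequence with bounded limsup.
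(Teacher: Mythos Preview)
Your opening move---inversion of adjunction to replace the l.c.i.\ variety $X$ by a smooth ambient variety $Y$---is exactly what the paper does. (Minor correction: the pair on $Y$ carries $\frq^c \cdot \widetilde{\frb}$, not $\frq \cdot \widetilde{\frb}$; the exponent is the codimension.) But the two genuine gaps are in the rest of the argument.

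First, the bound on the embedding codimension $c$. You correctly identify this as the crux, and you propose to control it via $\frm$-adic approximation. This is circular. The approximation takes place in the ambient ring $\cO_{Y,x} \cong k\llbracket x_1,\dots,x_{n+c}\rrbracket$, and the effective constants in the semicontinuity statement (e.g.\ the truncation level $N$ needed to preserve thresholds up to a given precision) depend on the ambient dimension $n+c$. So you cannot use this to bound $c$ along a sequence where $c$ is a priori unbounded. Truncating in $\cO_{X,x}$ instead does not help either, since $\dim_k \frm_{X,x}/\frm_{X,x}^2$ is unchanged by truncation modulo $\frm_{X,x}^N$ for $N\ge 2$. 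The paper resolves this by a direct and elementary observation (Proposition~\ref{bound}): since $X$ is log canonical and l.c.i., the pair $(A,\frp^e)$ is log canonical by inversion of adjunction, and testing this against the exceptional divisor of the blow-up of $A$ at $x$ gives $\dim_k T_xX \le 2n$. Thus $c \le n$, and one lands immediately in $\cM_{2n}^{\rm sm}$.

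Second, your ``generic-combination trick'' to turn a mixed threshold $\lct_{(Y,\frc)}(\fra)$ into a plain threshold on $Y \times \AAA^m$ via $(\sum_i t_i h_i)$ does not do what you claim. That trick converts an ideal into a hypersurface with the same threshold; it does not convert a mixed threshold into a plain one. The paper handles this step differently: it proves ACC for $\cM_n^{\rm sm}$ directly (Theorem~\ref{thm:M_n^sm}) by passing to a generic limit $(\fra,\frb)$, writing the limiting mixed threshold as $c'=r/s$, and using Remark~\ref{rem1} to get $\lct(\frb^s\cdot\fra^r)=1/s$, which is now a \emph{plain} threshold in $\cT_n^{\rm sm}$; then ACC for $\cT_n^{\rm sm}$ yields the contradiction.
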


By restricting to the case $\frb = \O_X$, we obtain the following immediate corollary.

\begin{corollary}\label{cor:intro:T_n^lci}
For every $n$, the set
$$
\cT_n^{\rm l.c.i.} :=
\{\lct(\fra)\mid \text{$X$ is log canonical and l.c.i., $\dim X = n$, $\fra\subsetneq\cO_X$} \}
$$
of log canonical thresholds on log canonical l.c.i. varieties of dimension $n$
satisfies the ascending chain condition.
\end{corollary}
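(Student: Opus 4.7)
The plan is to deduce the corollary directly from Theorem \ref{thm:intro:M_n^lci} by specializing the auxiliary ideal $\frb$ to $\O_X$. First I would verify the inclusion $\cT_n^{\rm l.c.i.} \subseteq \cM_n^{\rm l.c.i.}$. Take any log canonical l.c.i. variety $X$ of dimension $n$ and any nonzero ideal $\fra\subsetneq \O_X$. Since $X$ is log canonical, the pair $(X, \O_X)$ is log canonical in the sense of the hypothesis of Theorem \ref{thm:intro:M_n^lci}, so the mixed threshold $\lct_{(X, \O_X)}(\fra)$ is defined and lies in $\cM_n^{\rm l.c.i.}$. By the remark immediately following the definition of mixed log canonical thresholds, we have $\lct_{(X, \O_X)}(\fra) = \lct(\fra)$, since $\O_X \cdot \fra^c = \fra^c$ and the two log canonicity conditions coincide. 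Hence every element of $\cT_n^{\rm l.c.i.}$ is realized as an element of $\cM_n^{\rm l.c.i.}$.

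Second, I would invoke the elementary fact that the ACC is inherited by arbitrary subsets: any strictly increasing sequence of elements of $\cT_n^{\rm l.c.i.}$ is a fortiori a strictly increasing sequence in $\cM_n^{\rm l.c.i.}$, and by Theorem \ref{thm:intro:M_n^lci} such a sequence must be finite. Conclusion: $\cT_n^{\rm l.c.i.}$ satisfies the ACC.

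There is no real obstacle, and nothing beyond a formal unwinding of definitions is required. The entire mathematical content of the corollary is contained in the main theorem; the purpose of recording it separately is just to isolate the classical log canonical threshold setting (with no auxiliary ideal $\frb$) from the more flexible mixed setting that is needed for the inductive machinery leading up to Theorem \ref{thm:intro:M_n^lci}.
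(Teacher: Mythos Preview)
Your proposal is correct and matches the paper's own proof exactly: the paper simply observes that $\cT_n^{\rm l.c.i.} \subseteq \cM_n^{\rm l.c.i.}$ (by taking $\frb=\cO_X$) and invokes Theorem~\ref{thm:intro:M_n^lci}. Your write-up is just a more detailed unwinding of that one-line argument.
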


We will use Inversion of Adjunction (in the form treated in \cite{EM3}) to
reduce Theorem~\ref{thm:intro:M_n^lci} to the analogous statement
in which $X$ ranges over smooth varieties.
More precisely, we show that all sets
$$
\cM_n^{\rm sm} :=
\{\lct_{(X,\frb)}(\fra)\mid \text{$X$ is smooth, $\dim X = n$,
$\fra,\frb\subseteq\cO_X$, $\fra \ne \cO_X$, $(X,\frb)$ log canonical} \}
$$
satisfy the ascending chain condition.
It follows by Inversion of Adjunction that
every mixed log canonical threshold of the
form $\lct_{(X,\frb)}(\fra)$, with $\fra$ and $\frb$
ideal sheaves on an l.c.i. variety $X$, can be expressed as a mixed log canonical threshold
on a (typically higher dimensional) smooth variety.
This is the step that requires us to work with mixed log canonical thresholds.
The key observation that makes this approach work is that
if $X$ is an l.c.i. variety with log canonical singularities, then
$\dim_kT_xX\leq 2\dim X$ for every $x\in X$. This implies that the above reduction
to the smooth case keeps the dimension of the ambient variety bounded.

The proofs of the above results use a general method of associating to a sequence of ideals
of polynomials over a field $k$, an ideal of power series over a field extension of $k$.
The original construction considered in \cite{dFM} is a standard application
of nonstandard methods, and relies on the use
of ultrafilters. This construction
was subsequently replaced in \cite{Kol1} by a purely algebro-geometric
construction, that gives a \emph{generic limit} by using 
a sequence of $\frm$-adic approximations and field extensions.
The two constructions are similar in nature, and either construction
can be employed to prove the results of this paper.
We chose to present the proofs using the second construction,
which is geometrically more explicit.

A key ingredient is the following effective $\frm$-adic
semicontinuity property for log canonical thresholds
(that we will only use in the case when $X = \A^n$ and
$E$ lies over a point of $\A^n$).

\begin{theorem}\label{thm:intro:m-adic-semicont:ideals}
Let $X$ be a log canonical variety, and let $\fra \subsetneq \O_X$ be a proper ideal.
Suppose that $E$ is a prime divisor over $X$ computing $\lct(\fra)$, and consider
the ideal sheaf $\frq := \{ h \in \O_X \mid \ord_E(h) > \ord_E(\fra) \}$.
If $\frb\subseteq\cO_X$ is an ideal such that $\frb+\frq=\fra+\frq$, then 
after possibly restricting to an open neighborhood of the center of $E$, we have
$\lct(\frb)=\lct(\fra)$.
\end{theorem}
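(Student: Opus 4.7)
The plan is to prove $\lct(\frb) \le \lct(\fra)$ and $\lct(\frb) \ge \lct(\fra)$ separately: the first is a short valuation computation, while the second requires a deformation argument together with Inversion of Adjunction in the form of \cite{EM3}. For the easy direction, the hypothesis $\frb + \frq = \fra + \frq$ forces $\ord_E(\fra) = \ord_E(\frb) =: m$. Indeed, any $b \in \frb \subseteq \fra + \frq$ can be written $b = a + q$ with $a \in \fra$ and $q \in \frq$, so $\ord_E(b) \ge \min(\ord_E(a), \ord_E(q)) \ge m$, and the symmetric statement yields the reverse. Since $E$ computes $\lct(\fra) = A_E(X)/m$, this gives $\lct(\frb) \le A_E(X)/\ord_E(\frb) = \lct(\fra)$.

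For the reverse inequality, I would shrink $X$ to an affine neighborhood of the center of $E$, fix generators $f_1, \ldots, f_r$ of $\fra$, and use the hypothesis to choose $q_i \in \frq$ with $g_i := f_i - q_i \in \frb$. Setting $\frb_0 := (g_1, \ldots, g_r) \subseteq \frb$ reduces the problem to showing $\lct(\frb_0) \ge \lct(\fra)$, since $\lct(\frb) \ge \lct(\frb_0)$. I would then realize $\fra$ and $\frb_0$ as the fibers at $s = 0$ and $s = 1$ of the one-parameter family of ideals
$$
\cI := (f_1 - s q_1, \ldots, f_r - s q_r) \subseteq \cO_{X \times \A^1_s},
$$
and consider the total pair $(X \times \A^1, \cI^{\lct(\fra)})$. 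The horizontal divisor $\widetilde E := E \times \A^1$ over $X \times \A^1$ satisfies $A_{\widetilde E}(X \times \A^1) = A_E(X)$, while $\ord_{\widetilde E}(\cI) = m$: the minimum is attained on those $f_i$ with $\ord_E(f_i) = m$, because $\ord_E(q_i) > m$. Hence $\widetilde E$ supplies both a uniform upper bound $\lct(X \times \A^1, \cI^{\lct(\fra)}) \le \lct(\fra)$ and an lc place of the total pair whenever the latter is log canonical.

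The main obstacle is to establish that $(X \times \A^1, \cI^{\lct(\fra)})$ is log canonical in a neighborhood of the center of $\widetilde E$, and in particular over $s = 1$. Near $s = 0$ this is immediate: $(X, \fra^{\lct(\fra)})$ is log canonical by hypothesis, and Inversion of Adjunction along the smooth hypersurface $\{s = 0\}$ transports log canonicity to the ambient pair on a neighborhood of $X \times \{0\}$. Propagating this log canonicity across the entire family is the heart of the argument and exploits the horizontal lc place $\widetilde E$ essentially: the version of Inversion of Adjunction from \cite{EM3} characterizes log canonicity through the behavior of specific divisorial valuations, and applied with $\widetilde E$ (whose center dominates $\A^1_s$) it forces the non-log-canonical locus of the total pair to avoid a Zariski-open neighborhood of the center of $E \times \A^1$, hence in particular the fiber over $s = 1$. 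A second application of Inversion of Adjunction, this time along $\{s = 1\}$, then yields $\lct(\frb_0) \ge \lct(\fra)$, and we conclude $\lct(\frb) \ge \lct(\frb_0) \ge \lct(\fra)$.
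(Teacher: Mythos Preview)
Your proposal has a genuine gap at the propagation step, which is precisely where the content of the theorem lies. The easy inequality and the setup of the one-parameter family are fine, and you correctly observe that $\widetilde E = E \times \A^1$ is a horizontal divisor with $\ord_{\widetilde E}(\cI) = m$ and log discrepancy zero for $(X\times\A^1,\cI^{\lct(\fra)})$. The problem is your claim that Inversion of Adjunction from \cite{EM3} ``characterizes log canonicity through the behavior of specific divisorial valuations'' and, applied with $\widetilde E$, forces the non-lc locus away from the center of $\widetilde E$. That is not what \cite{EM3} says: the result there compares log canonicity of a pair restricted to a locally complete intersection subvariety with log canonicity of the ambient pair (with the defining ideal thrown in), and it gives no mechanism for propagating log canonicity along the base of a family. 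Knowing that $(X\times\A^1,\cI^{\lct(\fra)})$ is log canonical near $s=0$ and that $\widetilde E$ has log discrepancy zero does not, by itself, exclude the existence of some other divisor with negative log discrepancy centered over $s=1$. Nothing you have written rules this out.

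For comparison, the paper proceeds entirely differently: it first reduces to the \emph{principal} case by replacing $\fra$ and $\frb$ with products $f=f_1\cdots f_N$ and $g=g_1\cdots g_N$ of general linear combinations, so that $\lct(f)=\lct(\fra)/N$ and $E$ still computes it; then, on a common log resolution of $f$ and $g$, it uses the Connectedness Theorem of Shokurov and Koll\'ar to show that every divisor $E_j$ with $c\cdot\ord_{E_j}(g)-k_j\geq 1$ must be joined to $E$ through a chain of divisors computing $\lct(f)$, which forces $c\cdot\ord_{E_j}(g)-k_j=1$ and hence $\lct(g)\ge c$. Your deformation strategy is closer in spirit to Koll\'ar's original argument in \cite{Kol1}, but that argument needs Kawakita's Inversion of Adjunction \cite{Kawakita} together with MMP results from \cite{BCHM} to make the propagation go through---exactly the heavy machinery the present paper is designed to avoid. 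With only \cite{EM3} at hand, the propagation step does not close.
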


This result (for principal ideals) was first proven by Koll\'ar 
in \cite{Kol1} using deep results in the Minimal Model Program from \cite{BCHM}
and a theorem on Inversion of Adjunction from \cite{Kawakita}.
We give an elementary proof of 
Theorem~\ref{thm:intro:m-adic-semicont:ideals}
which only uses the Connectedness Theorem
of Shokurov and Koll\'ar (see Theorem~7.4 in \cite{Kol2}).
We note that in the case of a divisor $E$ with zero-dimensional center, Koll\'{a}r's proof
extends to cover also ideals in a power series ring, and this fact is important for
his approach.
In fact, as we will see, this version can be formally deduced from the statement of 
Theorem~\ref{thm:intro:m-adic-semicont:ideals}
(see Corollary~\ref{formal_case}).

It is interesting to observe how, in the end, all the results of
this paper only rely on basic facts in birational geometry, such
as Resolution of Singularities and the Connectedness Theorem
and, for the l.c.i. case, on Inversion of Adjunction.
We expect however that new ideas and more sophisticated techniques
will be necessary to tackle the ACC Conjecture
in its general formulation.

\smallskip

\noindent{\bf Acknowledgment}. We are grateful to Shihoko Ishii and Angelo Vistoli for
useful discussions and correspondence, and to J\'{a}nos Koll\'{a}r for his comments
and suggestions on previous versions of our work. Furthermore, as we have already mentioned,
two key ideas we use in this paper come from Koll\'{a}r's article \cite{Kol1}.

\section{Generalities on log canonical thresholds}

Let $k$ be a field of characteristic zero.
In what follows $X$ will be either a normal and $\QQ$-Gorenstein
variety over $k$, or $\Spec\left(k\llbracket x_1,\ldots,x_n\rrbracket\right)$.

We recall the definition of log canonical threshold in a slightly
more general version, and discuss some of the properties that will be needed later.
For the basic facts about log canonical pairs in the setting of algebraic varieties, see
\cite{Kol2} or Chapter~9 in \cite{positivity}, while for the case of the
spectrum of a formal power series ring we refer to \cite{dFM}. The key point is that by
\cite{Temkin}, log resolutions exist also in
the latter case, and therefore the usual theory
of log canonical pairs carries through.

Suppose that $X$ is as above. Let $\fra$ and $\frb$ be nonzero coherent sheaves
of ideals on $X$ with $\fra \ne \cO_X$,
and assume that the pair $(X,\frb)$ is log canonical.
We consider the following relative version
of the definition of log canonical threshold (there is an analogous definition
in the language of $\Q$-divisors that is broadly used in the literature):
we define the \emph{mixed log canonical threshold} of $\fra$ with respect to
the pair $(X,\frb)$ to be
$$
\lct_{(X,\frb)}(\fra):=\sup\{c\geq 0\mid \text{$(X,\frb\.\fra^c)$ is log canonical}\}.
$$
Whenever the ambient variety $X$ is understood, we drop it from the notation,
and simply write $\lct_\frb(\fra)$.
Observe that in the case $\frb = \cO_X$,
the mixed log canonical threshold $\lct_{\cO_X}(\fra)$
is nothing else than the usual {\it log canonical threshold} $\lct(\fra)$ of $\fra$.
We make the convention 
$\lct_\frb(0) = 0$ and $\lct_\frb(\cO_X) = \infty$.

The fact that log canonicity can be checked on a log resolution allows us to describe
the mixed log canonical threshold in terms of any such resolution.
Suppose that $\pi\colon Y\to X$ is a log resolution of $\fra\cdot\frb$, and write
$\fra\cdot\cO_Y=\cO(-\sum_ia_iE_i)$, $\frb\cdot\cO_Y=\cO(-\sum_ib_iE_i)$, and
$K_{Y/X}=\sum_ik_iE_i$. Still assuming that $\fra$ and $\frb$ are nonzero ideals,
$\fra \ne \cO_X$, and $(X,\frb)$ is log canonical (that is, $\lct(\frb) \ge 1$),
it follows from the characterization of log canonicity in terms of a log resolution that
\begin{equation}\label{eq0}
\lct_{\frb}(\fra) =\min\left\{\frac{k_i+1-b_i}{a_i}\mid a_i>0\right\}.
\end{equation}
We see from the above formula that the mixed log canonical threshold is a rational number.
Note also that it is zero if and only if there is $i$ such that $a_i>0$ and $b_i = k_i+1$
(in other words, if $(X,\frb)$ is not Kawamata log terminal and there is a non-klt center
contained in the zero-locus of $\fra$).

It is convenient to use also a local version of the (mixed)  log canonical threshold.
For every point $p\in V(\fra)$ such that the pair $(X,\frb)$ is log canonical in some neighborhood of $p$, if in (\ref{eq0}) we take the minimum only over those $i$ such that
$p\in \pi(E_i)$, we get the \emph{mixed log canonical threshold at $p$}, denoted 
$\lct_{(X,\frb),p}(\fra)$. This is the maximum of $\lct_{\frb\vert_U}(\fra\vert_U)$, when $U$ ranges over the open neighborhoods of $p$. When $\frb=\cO_X$, we simply write
$\lct_p(\fra)$.

\begin{remark}\label{rem0}
It follows from the description in terms of a log resolution that if
$X=U_1\cup\ldots\cup U_r$, with
$U_j$ open, then $\lct_{\frb}(\fra)=\min_j\lct_{\frb\vert_{U_j}}(\fra\vert_{U_j})$.
\end{remark}

\begin{remark}\label{rem1}
If $\frb$ and $\fra$ are as above and $c: =\lct_{\frb}(\fra)$,
then $\lct(\frb\cdot \fra^{c})=1$ (where, of course, $\lct(\frb\cdot\fra^c)$ is the largest nonnegative $q$
such that the pair $(X,\frb^q\cdot\fra^{qc})$ is log canonical). Indeed, by assumption the pair
$(X,\frb\cdot\fra^c)$ is log canonical, and for every $\alpha>1$ the pair
$(X,(\frb\cdot\fra^c)^{\alpha})$ is not log canonical since
$(X,\frb\cdot\fra^{c\alpha})$ is not.
Note however that the converse of this property does not hold:
in fact, if $\lct(\frb) = 1$ and the zero-locus of $\fra$ does not contain
any non-klt center of $(X,\frb)$, then $c = \lct_\frb(\fra) > 0$
and $\lct(\frb\.\fra^t) = 1$ for every $0 < t \le c$.
\end{remark}

\begin{remark}\label{remark3}
Suppose that
 $X$, $\fra$ and $\frb$ are as above, with $X$ smooth. For every
 $p\in V(\fra)$, we have
$\lct_{(X,\frb),p}(\fra)=\lct_{(X',\frb')}(\fra')$, where 
$X'=\Spec(\widehat{\cO_{X,p}})$, and $\fra'$, $\frb'$ are the pull-backs of the ideals $\fra$
and, respectively, $\frb$ to $X'$. The argument follows as in the case $\frb=\cO_X$, for which we refer to
\cite[Proposition 2.9]{dFM}.
\end{remark}

We will adopt the following terminology.

\begin{definition}
Let $X$ and $\fra,\frb \subseteq \O_X$ be as above.
We say that a prime divisor $E$ over $X$ {\it computes $\lct_\frb(\fra)$}
if there is a log resolution $\pi \colon Y \to X$ such that, with the above notation,
$E$ induces the same valuation as a divisor $E_i$ on $Y$ for which $a_i > 0$ and
the minimum in \eqref{eq0} is achieved for this $i$.
\end{definition}

Suppose now that $k$ is algebraically closed.
For every $n \ge 0$, we consider the sets $\cT_n^{\rm sm}$, $\cT_n^{\rm quot}$,
$\cT_n^{\rm l.c.i}$, $\cM_n^{\rm sm}$ and $\cM_n^{\rm l.c.i.}$ defined
in the Introduction. Note that for $n=0$ all these sets are equal to $\{0\}$.
It is convenient to extend the definition to $n < 0$
by declaring all these sets to be empty in this range.
We will use the basic fact (cf. \cite[Proposition~3.3]{dFM}) that for every $n\geq 1$,
$$
\cT_n^{\rm sm} =
\{\lct_0(\fra) \mid \fra \subseteq (x_1,\ldots,x_n)\subset k[x_1,\ldots,x_n] \}.
$$
Similarly, for every $n\geq 1$ we have 
$$
\cM_n^{\rm sm} =
\{\lct_{({\mathbf A}^n,\frb),0}(\fra) \mid \fra,\frb \subseteq k[x_1,\dots,x_n],
\fra\subseteq (x_1,\ldots,x_n), \lct_0(\frb) \ge 1 \}.
$$
The proof is analogous to the non-mixed case, and is left to the reader.

\section{Effective $\frm$-adic semicontinuity of log canonical thresholds}

Let $X$ be a log canonical variety
defined over an algebraically closed field of characteristic zero $k$.
We start by proving Theorem~\ref{thm:intro:m-adic-semicont:ideals}
in the special case of principal ideals.

\begin{theorem}\label{thm:m-adic-semicont}
Let $E$ be a divisor over $X$, computing $\lct(f)$ for some $f\in\cO(X)$.
If $g\in\cO(X)$ is such that $\ord_E(f-g)>\ord_E(f)$, then after possibly replacing
$X$ by an open neighborhood of the center of $E$, we have $\lct(f)=\lct(g)$.
\end{theorem}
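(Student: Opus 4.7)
My plan is to argue by contradiction, interpolating between $f$ and $g$ via a one-parameter family of mixed pairs and applying the Connectedness Theorem of Shokurov--Koll\'{a}r at a critical member. Write $h := g - f$, $e := \ord_E(f)$, $c := \lct(f) = (k_E+1)/e$, and let $Z$ denote the center of $E$ on $X$.

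The easy direction is immediate: since $\ord_E(h) > e$, one has $\ord_E(g) = e$, so $E$ furnishes the bound $\lct_Z(g) \le (k_E+1)/e = c$ on any neighborhood of $Z$. For the reverse, suppose by contradiction that $\lct(g) < c$ at some $x_0 \in Z$; then there exists a prime divisor $E'$ over $X$ with $x_0$ in its center such that $c\,\ord_{E'}(g) > k_{E'}+1$. Now consider the family of pairs $(X, f^{c-\epsilon} g^\epsilon)$ for $\epsilon \in [0,c]$. Because $\ord_E(f) = \ord_E(g) = e$, the identity $(c-\epsilon)\ord_E(f) + \epsilon\ord_E(g) = k_E+1$ shows that $E$ remains a log canonical place throughout the family, while the pair is log canonical at $\epsilon = 0$ and fails log canonicity near $x_0$ at $\epsilon = c$. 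Let $\epsilon_\star \in [0,c)$ be the supremum of $\epsilon$ for which log canonicity still holds near $x_0$; a linearity/continuity argument then produces a second divisor $E''$ over $X$ with $x_0$ in its center satisfying $(c-\epsilon_\star)\ord_{E''}(f) + \epsilon_\star\ord_{E''}(g) = k_{E''}+1$ and $\ord_{E''}(g) > \ord_{E''}(f)$.

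Apply the Connectedness Theorem to $(X, f^{c-\epsilon_\star} g^{\epsilon_\star})$ on a common log resolution $\pi \colon Y \to X$ of $f \cdot g$: the union of its log canonical places, intersected with the fibre $\pi^{-1}(x_0)$, is connected, so $E$ and $E''$ are linked by a chain of lc-place divisors on this fibre. For every lc place $F$ of the critical pair one has
\[
c\,\ord_F(f) = k_F + 1 - \epsilon_\star\bigl(\ord_F(g) - \ord_F(f)\bigr),
\]
and the jump of $\ord_F(g) - \ord_F(f)$ from $0$ at $E$ to a positive value at $E''$ along the chain identifies a first component $F^*$ where $\ord_{F^*}(g) > \ord_{F^*}(f)$ occurs. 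By analyzing $F^*$ together with its intersection on $Y$ with the preceding chain component, and by exploiting the hypothesis $\ord_E(h) > \ord_E(f)$ to propagate a bound on $\ord_\bullet(h)$ along the chain (so that cancellation between $f$ and $h$ cannot occur at $F^*$), one obtains a divisor violating the log canonicity of $(X, f^c)$, the desired contradiction.

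\textbf{Main obstacle.} The crux is the final step: extracting a sharp numerical contradiction from the topological output of Connectedness. Since blowing up intersections of two log canonical places need not produce a new log canonical place once $\dim X \geq 3$, one cannot rely on a naive inductive blow-up, and must instead carefully use the normal-crossing geometry on $Y$ together with the hypothesis $\ord_E(h) > \ord_E(f)$ to control how $\ord_\bullet(h)$ evolves along the chain---precisely to rule out the ``cancellation'' divisors along which $\ord_\bullet(h) = \ord_\bullet(f)$ could cause an uncontrolled drop in $\lct(g)$.
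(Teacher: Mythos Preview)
Your setup through the critical pair $(X, f^{c-\epsilon_\star}g^{\epsilon_\star})$ and the appeal to Connectedness is sound, and it is correct that every lc place $F$ of this pair has $\ord_F(g) \geq \ord_F(f)$ (else $(X,f^c)$ would fail to be lc), so all chain components preceding $F^*$ are in fact lc places of $(X,f^c)$. But the argument breaks at precisely the step you flag as the main obstacle, and you do not supply it. To derive a contradiction at $F^*$ you need the preceding component $F^\dagger$ (an lc place of $(X,f^c)$) to satisfy $\ord_{F^\dagger}(h) > \ord_{F^\dagger}(f)$; only then does the local computation at a general point of $F^\dagger \cap F^*$ force $\ord_{F^*}(f) = \ord_{F^*}(g)$. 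You propose to propagate this strict inequality along the chain from $E$, but the local computation only yields $\ord_{F_{k+1}}(f) = \ord_{F_{k+1}}(g)$ at the next divisor, never $\ord_{F_{k+1}}(h) > \ord_{F_{k+1}}(f)$: writing $\pi^*f = uy_k^{a_k}y_{k+1}^{a_{k+1}}$ and $\pi^*g = vy_k^{a_k}y_{k+1}^{a_{k+1}}$ near $F_k\cap F_{k+1}$, one gets $u-v\in(y_k)$ but no divisibility by $y_{k+1}$. So the induction stalls after one step, and nothing prevents a chain member with $\ord_{F_k}(h) = \ord_{F_k}(f)$.

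The paper closes this gap not by propagation but by a preliminary reduction that arranges the strict inequality at \emph{every} lc place of $(X,f^c)$ from the outset. One replaces $f,g$ by $f_m := f^m(f-g)$ and $g_m := g^m(f-g)$ for $m \gg 0$; a short computation shows that the divisors computing $\lct(f_m)$ are exactly those computing $\lct(f)$ with minimal ratio $\ord(f)/\ord(f-g)$, and for each of these $\ord(f_m-g_m) > \ord(f_m)$ holds automatically. Since $\lct(f) = \lim_m m\cdot\lct(f_m)$ and likewise for $g$, it suffices to prove the theorem for $f_m,g_m$. After this reduction, Connectedness is applied directly to $(X,g^c)$ (no interpolation): starting from any lc place $E_i$ of $(X,f^c)$, the local lemma forces every adjacent $E_j$ with $c\,\ord_{E_j}(g) \geq k_j+1$ to satisfy $\ord_{E_j}(f) = \ord_{E_j}(g)$, hence to be again an lc place of $(X,f^c)$, and one continues. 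Your interpolation is a reasonable variant, but it does not bypass the need for this reduction.
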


The interesting inequality is $\lct(g)\geq\lct(f)$, the reverse one being trivial.
Note that if the center of $E$ on $X$ is equal to a point $p\in X$, then
whenever $\mult_p(f-g) > \ord_E(f)$, we have $\ord_E(f-g)>\ord_E(f)$, and the theorem gives
$\lct_p(g)=\lct_p(f)$. 

As already explained in the Introduction, a proof of the theorem was given in \cite{Kol1} relying on 
deep results in the Minimal Model Program and on Inversion of Adjunction. We 
give an elementary proof, only using the Connectedness Theorem.

\begin{proof}[Proof of Theorem~\ref{thm:m-adic-semicont}]
The inequality $\lct(f)\geq\lct(g)$ is easy.
Indeed, since $\ord_E(f-g)>\ord_E(f)$, we have $\ord_E(g)=\ord_E(f)$, and therefore,
if $Y$ is the model over $X$ on which $E$ lies, then
$$
\lct(g)\leq \frac{\ord_E(K_{Y/X})+1}{\ord_E(g)}
=\frac{\ord_E(K_{Y/X})+1}{\ord_E(f)}=\lct(f).
$$

The first step in the proof of the reverse inequality
is to reduce to the case when $\ord_F(f-g)>\ord_F(f)$ for \emph{all}
divisors $F$ that compute $\lct(f)$ on some log resolution of $fg$.
In order to do this, let us choose
a log resolution $\pi \colon Y \to X$ of $fg(f-g)$
such that the divisor $E$ appears on $Y$.
Let $E_1,\dots,E_t$ be the irreducible components of the divisor
$K_{Y/X} + \pi^*(\Div(fg(f-g)))$.
After relabelling the indices, we may assume that $E=E_1$.
In the following, we denote
$$
a_i := \ord_{E_i}(f), \quad
b_i := \ord_{E_i}(g), \quad \text{and}\quad
k_i := \ord_{E_i}(K_{Y/X}).
$$

In order to prove the theorem, it is enough to show that for every $q\in \pi(E)$
we have $\lct_q(g)\geq\lct_q(f)$ (note that $\lct_q(f)=\lct(f)$). Fix such $q$. After possibly replacing 
$X$ by an open neighborhood of $q$, we may assume that $q\in\pi(E_i)$ for every $i$.

For every $m\geq 1$, we consider 
$f_m:=f^mh$ and $g_m:=g^mh$, where $h=f-g$. Note that by assumption
$\pi$ is a log resolution for both $f_m$ and $g_m$.

\begin{lemma}\label{lem:f_m-g_m}
If $m\gg 1$, then
\begin{enumerate}
\item[i)] $E_i$ computes $\lct(f_m)$ if and only if it computes $\lct(f)$ and, in addition, 
$$\frac{\ord_{E_i}(f)}{\ord_{E_i}(h)}=\min\left\{\frac{\ord_{E_j}(f)}{\ord_{E_j}(h)}\mid
E_j\,\text{computes}\,\lct(f)\right\}.$$
\item[ii)] For every $i$ such that $E_i$ computes $\lct(f_m)$, we have
$\ord_{E_i}(f_m-g_m)>\ord_{E_i}(f_m)$.
\end{enumerate}
\end{lemma}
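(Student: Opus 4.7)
The plan is to reduce both parts to arithmetic bookkeeping on the log resolution $\pi$. Since $\pi$ log-resolves $fg(f-g)$, it automatically log-resolves $f_m=f^mh$ and $g_m=g^mh$ for every $m\geq 1$. Writing $c_i:=\ord_{E_i}(h)$, I will record $\ord_{E_i}(f_m)=ma_i+c_i$ and $\ord_{E_i}(g_m)=mb_i+c_i$, so that
$$
\lct(f_m)=\min_{\{i\,:\,ma_i+c_i>0\}}\frac{k_i+1}{ma_i+c_i}.
$$
Two input facts will drive everything: $E=E_1$ computes $\lct(f)$, so $a_1>0$ and $\alpha_1:=(k_1+1)/a_1=\lct(f)=:\alpha$; and the hypothesis $\ord_E(f-g)>\ord_E(f)$ yields $c_1>a_1>0$.

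For part (i), I will partition indices. For $i$ with $a_i=0$ the corresponding term $(k_i+1)/c_i$ is a positive constant independent of $m$. For $i$ with $a_i>0$, I rewrite $(k_i+1)/(ma_i+c_i)=\alpha_i/(m+c_i/a_i)$ where $\alpha_i:=(k_i+1)/a_i$, and separate these indices into ``group A'' (with $\alpha_i=\alpha$, i.e.\ $E_i$ computes $\lct(f)$) and ``group B'' (with $\alpha_i>\alpha$). For $m$ large, the constant terms and the group B terms (which are asymptotic to $\alpha_i/m>\alpha/m$) will all exceed any group A term with $c_i>0$ (each of which is strictly less than $\alpha/m$), the comparison reducing to $(\alpha_i-\alpha)m$ eventually overwhelming a bounded quantity. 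Within group A, the expression $\alpha/(m+c_i/a_i)$ is smallest exactly when $c_i/a_i$ is largest, equivalently when $a_i/c_i$ is smallest\,---\,a criterion independent of $m$. The existence of $E_1$ in group A with $c_1>0$ ensures that this minimum is attained, proving (i).

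For part (ii), the key point is that if $E_i$ computes $\lct(f_m)$, then by (i) one has $a_i/c_i\leq a_1/c_1<1$, hence $c_i>a_i$. Combined with $g=f-h$, this forces $b_i=\ord_{E_i}(g)=a_i$, so the identity
$$
f^m-g^m=(f-g)\sum_{j=0}^{m-1}f^{m-1-j}g^j
$$
gives $\ord_{E_i}(f_m-g_m)\geq 2c_i+(m-1)a_i$, which exceeds $\ord_{E_i}(f_m)=ma_i+c_i$ by $c_i-a_i>0$. The only real subtlety\,---\,and the place where the ``$m\gg 1$'' hypothesis is used\,---\,is choosing $m$ large enough so that the linear-in-$m$ comparisons in part (i) hold simultaneously for all finitely many indices; since only fixed rationals $\alpha_i, a_i, c_i$ are involved, this yields a uniform threshold and poses no genuine obstacle.
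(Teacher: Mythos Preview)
Your proof is correct and follows essentially the same approach as the paper. For part (i) you make the trichotomy ($a_i=0$, group A, group B) explicit where the paper phrases the same asymptotic comparison as a lexicographic ordering on $\bigl((k_i+1)/a_i,\,(k_i+1)/c_i\bigr)$ and then notes that among divisors computing $\lct(f)$ the second coordinate reduces to $a_i/c_i$; for part (ii) you use the factorization $f^m-g^m=(f-g)\sum_{j=0}^{m-1}f^{m-1-j}g^j$ where the paper uses the binomial expansion of $(g+h)^m-g^m$, but both routes yield the same lower bound $\ord_{E_i}(f^m-g^m)\geq c_i+(m-1)a_i>ma_i$.
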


\begin{proof}
We put $c_i=\ord_{E_i}(h)$. 
Since $m\gg 1$, we have
$$\frac{k_i+1}{a_i+\frac{c_i}{m}}\leq\frac{k_j+1}{a_j+\frac{c_j}{m}}$$
if and only if $\frac{k_i+1}{a_i}\leq\frac{k_j+1}{a_j}$, and either this inequality is strict, or
$\frac{k_i+1}{c_i}\leq \frac{k_j+1}{c_j}$.
This shows that every divisor $E_i$ that computes $\lct(f_m)$ also computes
$\lct(f)$. Furthermore, if $E_i$ computes $\lct(f)$, then it computes $\lct(f_m)$
if and only if $\frac{k_i+1}{c_i}\leq\frac{k_j+1}{c_j}$ for every $j$ such that 
$E_j$ computes $\lct(f)$. Note that this holds if and only if $\frac{a_i}{c_i}\leq
\frac{a_j}{c_j}$ (since $k_i+1=\lct(f)a_i$ and $k_j+1=\lct(f)a_j$), hence i).

Suppose now that $E_i$ computes $\lct(f_m)$. It follows from i) and our hypothesis that
$\frac{a_i}{c_i}\leq\frac{a_1}{c_1}<1$.
Since $f_m-g_m=(f^m-g^m)h$, in order to prove ii) it is enough to show that
$\ord_{E_i}(f^m-g^m)>m\cdot\ord_{E_i}(f)$. 
Note that $a_i<c_i$ implies $\ord_{E_i}(f)=\ord_{E_i}(g)$ (recall that $g=f-h$).
We write
$$
f^m-g^m=(g+h)^m-g^m=\sum_{\ell=1}^m{{m}\choose
{\ell}}h^{\ell}g^{m-\ell}.
$$
For every $\ell\geq 1$ we have
$\ord_{E_i}(h^{\ell}g^{m-{\ell}})>m\cdot\ord_{E_i}(f)$, hence
$\ord_{E_i}(f^m-g^m)>m\cdot\ord_{E_i}(f)$.
This completes the proof of the lemma.
\end{proof}

Observe that $\lct(f)=\lim_{m\to\infty} m\cdot\lct(f_m)$
and $\lct(g) =\lim_{m\to \infty} m\cdot\lct(g_m)$.
Indeed, it follows from definition that
$$
\lct(f_m)=\min_i\frac{k_i+1}{ma_i+c_i}
=\frac{1}{m}\cdot\min_i\frac{k_i+1}{a_i+\frac{c_i}{m}},
$$
which gives the first equality, and the second one follows in the same way.
Thus, if we can prove the theorem for $f_m$ and $g_m$ in place
of $f$ and $g$, for all $m \gg 1$, then we deduce the statement for $f$ and $g$.

Therefore, by Lemma~\ref{lem:f_m-g_m}, we are reduced to
proving Theorem~\ref{thm:m-adic-semicont} in the case when there is a log resolution
$\pi\colon Y\to X$ for $fg$ such that for all divisors $E_i$ on $\pi$ that compute
$\lct(f)$ we have $\ord_{E_i}(f-g)>\ord_{E_i}(f)$. We shall thus assume that
this is the case. We keep the notation previously introduced, so that
in particular $a_i = \ord_{E_i}(f)$ and $b_i = \ord_{E_i}(g)$ for every $i$.
Recall also that we may assume $q\in \pi(E_i)$ for all $i$. 

\begin{lemma}\label{lem:E_i-E_j}
Under the above assumptions,
if $E_i$ is a divisor computing $\lct(f)$, then
$\ord_{E_j}(f)=\ord_{E_j}(g)$ for every $j$ such that $E_i\cap E_j\neq\emptyset$.
\end{lemma}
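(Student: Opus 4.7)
The plan is to argue by a direct local computation in simple normal crossings coordinates on $Y$, without invoking the Connectedness Theorem for this particular step.

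Fix a point $p \in E_i \cap E_j$ and choose local coordinates at $p$ in which each component of $\sum_l E_l$ through $p$ is a coordinate hyperplane, with $E_i = \{x_1 = 0\}$ and $E_j = \{x_2 = 0\}$; denote the remaining such coordinates by $y_1,\dots,y_r$. Because $\pi$ is a log resolution of $fg(f-g)$, each of $f$, $g$, and $h := f-g$ is, near $p$, a monomial in these coordinates times a unit:
\[
f = x_1^{a_i} x_2^{a_j} M_f\, \alpha,\qquad g = x_1^{b_i} x_2^{b_j} M_g\, \gamma,\qquad h = x_1^{c_i} x_2^{c_j} M_h\, \beta,
\]
where $M_f, M_g, M_h$ are monomials in $y_1,\dots,y_r$ and $\alpha,\beta,\gamma$ are units at $p$. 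The hypothesis standing at this stage of the proof (after the reduction in Lemma~\ref{lem:f_m-g_m}) gives $c_i \ge a_i+1$, and then the identity $g = f - h$ immediately forces $b_i = a_i$.

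Next I would substitute these monomial forms into $f - g = h$, factor out the common power $x_1^{a_i}$, and restrict to $E_i = \{x_1 = 0\}$. Because $c_i - a_i \ge 1$, the contribution from $h$ vanishes identically on $E_i$, leaving an identity on $E_i$ near $p$ of the shape
\[
x_2^{a_j}\, M_f\, \alpha|_{E_i} \;=\; x_2^{b_j}\, M_g\, \gamma|_{E_i}.
\]
Evaluating this at a generic point of the divisor $E_i \cap E_j$ inside $E_i$ (chosen to avoid the traces of the other components $E_k \cap E_i$), the factors $M_f$, $M_g$, $\alpha|_{E_i}$, $\gamma|_{E_i}$ are all nonvanishing, and comparing orders in $x_2$ then forces $a_j = b_j$, as desired.

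The argument is essentially bookkeeping once the monomial normal forms for $f$, $g$, and $h$ are in place; the only minor subtlety is choosing the test point inside $E_i \cap E_j$ so that the remaining unit and monomial factors do not vanish, which is automatic from the simple normal crossings structure of the resolution.
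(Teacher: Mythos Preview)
Your proof is correct and follows essentially the same approach as the paper: a local simple normal crossings computation showing first that $b_i=a_i$ from $\ord_{E_i}(f-g)>\ord_{E_i}(f)$, and then that the quotients $f/x_1^{a_i}$ and $g/x_1^{a_i}$ agree on $E_i$, forcing $a_j=b_j$. The only cosmetic difference is that the paper takes $p\in E_i\cap E_j$ general from the outset so that no other component $E_k$ passes through $p$, which eliminates your extra monomial factors $M_f,M_g$ and the final step of moving to a generic point of $E_i\cap E_j$.
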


\begin{proof}
Let $p \in E_i \cap E_j$ be a general point, and let $y_i, y_j \in \O_{Y,p}$
be part of a regular system of parameters, and generating the images in 
$\cO_{Y,p}$ of the ideals defining $E_i$ and $E_j$, respectively.
We have in $\cO_{Y,p}$
$$
\pi^*(f)=u y_i^{a_i} y_j^{a_j} \quad\text{and}\quad
\pi^*(g)=v y_i^{b_i} y_j^{b_j},
$$
where $u,v \in \O_{Y,p}$ are invertible elements.
By assumption, $\pi^*(f-g)=y_i^{a_i+1}w$ for some $w \in \O_{Y,p}$.
This has two consequences. The first is that $b_i=a_i$. Furthermore,
we see that $y_i^{-a_i}\pi^*(f)$ and $y_i^{-a_i}\pi^*(g)$ have the same restriction to
$E_i$. This implies that $b_j = a_j$, which is the assertion in the lemma.
\end{proof}

We can now finish the proof of Theorem~\ref{thm:m-adic-semicont}. Let $c=\lct(f)$,
and for every $i$ let
$$
\alpha_i := ca_i - k_i \quad\text{and}\quad
\beta_i := cb_i - k_i.
$$
Note that $\alpha_i \le 1$ for every $i$, and equality holds precisely for those $i$
such that $E_i$ computes $\lct(f)$.
The above lemma says that for every $i$ such that $\alpha_i=1$, we have
$\beta_i=1$, and more generally $\alpha_j = \beta_j$ for every $j$ such that
$E_i\cap E_j\neq\emptyset$.

To finish, we apply the main ingredient of the proof,
namely, the Connectedness Theorem
of Shokurov and Koll\'{a}r (see Theorem~7.4 in \cite{Kol2}), which in
our case says that the union $\cup_{\beta_j\geq 1}E_j$ is connected
in the neighborhood of $\pi^{-1}(q)$.
Since $q\in\pi(E_i)$ for every $i$, this implies that 
$\cup_{\beta_j\geq 1}E_j$ is connected.

Let us look at an arbitrary divisor $E_i$ that computes $\lct(f)$, so that
$\alpha_i=1$. We have seen that in this case $\beta_i=1$.
If $E_j$ is any other divisor that meets $E_i$ and such that $\beta_j\geq 1$, then
we have $1 \geq \alpha_j = \beta_j\geq 1$ by Lemma~\ref{lem:E_i-E_j}, and therefore
$\alpha_j = \beta_j = 1$. This implies by induction on $s$
that for every sequence of divisors $E_i,E_{j_1},\ldots,E_{j_s}$
such that any two consecutive divisors intersect, and such that
$\beta_{j_{\ell}}\geq 1$ for all $\ell$,
we have $\alpha_{j_{\ell}}=\beta_{j_{\ell}}=1$ for every $\ell$.
Since the set  $\cup_{\beta_j\geq 1}E_j$ is connected, 
we conclude that
$\beta_{j}\leq 1$ for every $j$, and thus $\lct(g)\geq c$.
This completes the proof of Theorem~\ref{thm:m-adic-semicont}.
\end{proof}

\begin{remark}
The above proof also gives the following statement. Suppose that 
$f$ and $g$ are as in Theorem~\ref{thm:m-adic-semicont}, such that for \emph{all}
divisors $E_i$ over $X$ computing $\lct(f)=c$, we have $\ord_{E_i}(f-g)>\ord_{E_i}(f)$
(it is easy to see that it is enough to check this condition only on the divisors on a fixed
log resolution of $f$). By the theorem, after restricting to an open neighborhood of
the non-klt locus of $(X,f^c)$ (this is the union of the centers of the divisors $E_i$
computing $\lct(f)$),
we have $\lct(g)=c$. In addition, the proof shows that
every divisor over $X$ that computes $\lct(g)$ also
computes $\lct(f)$.
\end{remark}

Theorem~\ref{thm:m-adic-semicont} can easily be extended to ideals,
as stated in Theorem~\ref{thm:intro:m-adic-semicont:ideals}, as follows.

\begin{proof}[Proof of Theorem~\ref{thm:intro:m-adic-semicont:ideals}]
We may assume that $X$ is affine.
Again, it is immediate
to see that the hypothesis implies that $\lct(\frb) \le \lct(\fra)$.
In order to prove the reverse inequality,
let $N$ be an integer larger than $\lct(\fra)$, and choose $N$ general
linear combinations $f_1,\dots,f_N$ of a fixed set of generators of $\fra$.
Note in particular that $\ord_E(f_i)= \ord_E(\fra)$ for all $i$.
Moreover, if $f:=f_1\dots f_N$, then $\lct(f)=\lct(\fra)/N$ and $E$ computes $\lct(f)$
(see, for example, \cite[Proposition 9.2.26]{positivity}). 

By assumption, we can write $f_i=g_i+h_i$, with $g_i\in\frb$ and $h_i\in \frq$.
Note that we have $\ord_E(h_i)>\ord_E(\fra)$, and hence $\ord_E(g_i)=\ord_E(\fra)$,
for every $i$. If $g:=g_1  \dots g_N$, then we can write
$$
f-g=h_1f_2\dots f_N + g_1h_2f_3\dots f_N + \dots + g_1g_2\dots g_{N-1}h_N.
$$
Since all terms in the above sum have order along $E$ larger than
$\ord_E(f)$, we conclude by Theorem~\ref{thm:m-adic-semicont} that
after possibly replacing $X$ by an open neighborhood of the center of $E$, we have
$\lct(g)\geq \lct(f)$.
Since $g\in\frb^N$, it follows that $\lct(\frb)\geq\lct(\fra)$.
\end{proof}

\begin{corollary}\label{formal_case}
Let $X=\Spec(R)$, where $R=k\llbracket x_1,\ldots,x_n\rrbracket$, and let 
$\fra$ and $\frb$ proper ideals in $R$. Suppose that $E$ is a divisor over $X$ with center
equal to the closed point, such that $E$ computes $\lct(\fra)$. If $\frb+\frq=\fra+\frq$, where
$\frq=\{h\in R\mid\ord_E(h)>\ord_E(\fra)\}$, then $\lct(\frb)=\lct(\fra)$.
\end{corollary}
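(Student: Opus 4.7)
My plan is to reduce Corollary~\ref{formal_case} to Theorem~\ref{thm:intro:m-adic-semicont:ideals} via $\frm$-adic approximation: I replace $\fra,\frb\subset R$ by polynomial ideals on $\A^n$ with the same numerical behavior along the relevant divisors, apply the algebraic theorem, and transfer the conclusion back using Remark~\ref{remark3}, which identifies local log canonical thresholds on $\A^n$ with log canonical thresholds on $\Spec R$. The underlying principle is that each ingredient of the hypothesis---the log resolution of $\fra\cdot\frb$, the valuation $\ord_E$ with its discrepancy, and the congruence $\fra+\frq=\frb+\frq$---is determined by data modulo $\frm^M$ for $M$ sufficiently large.

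First I fix a formal log resolution $\pi\colon Y\to X=\Spec R$ of $\fra\cdot\frb\cdot\frm$ on which $E$ appears, with relevant prime divisors $E_1=E,\dots,E_t$ and invariants $a_i=\ord_{E_i}(\fra)$, $b_i=\ord_{E_i}(\frb)$, $k_i=\ord_{E_i}(K_{Y/X})$, $d_i=\ord_{E_i}(\frm)$; after discarding components that do not lie over the closed point, $d_i\geq 1$. Choose $M$ with $Md_i>\max(a_i,b_i,k_i+1)$ for every $i$, pick generators $f_\ell$ of $\fra$ and $g_m$ of $\frb$, let $\widetilde f_\ell,\widetilde g_m\in k[x_1,\dots,x_n]$ be their polynomial truncations modulo $\frm^M$, and set $\widetilde\fra=(\widetilde f_\ell)$ and $\widetilde\frb=(\widetilde g_m)$ in $k[x_1,\dots,x_n]$.

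The first key point is the identity $\widetilde\fra\cdot\cO_Y=\fra\cdot\cO_Y$ (and likewise for $\frb$): on $Y$, $\pi^*\widetilde f_\ell$ differs from $\pi^*f_\ell$ by an element of $\frm^M\cdot\cO_Y$, whose order along each $E_i$ is $Md_i>a_i$, so the pulled-back ideal sheaves coincide. Hence $\pi$ is also a log resolution of $\widetilde\fra\cdot\widetilde\frb\cdot R$ with identical numerical data, giving $\lct(\widetilde\fra\cdot R)=\lct(\fra)$ and $\lct(\widetilde\frb\cdot R)=\lct(\frb)$, which by Remark~\ref{remark3} equal $\lct_0(\widetilde\fra)$ and $\lct_0(\widetilde\frb)$ on $\A^n$. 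Moreover, the valuation $\ord_E$ restricted to $k(x_1,\dots,x_n)$ corresponds to a prime divisor $\widetilde E$ over $\A^n$---obtained by algebraizing the relevant formal blowups via Artin approximation, or by Zariski-shrinking a suitable algebraization of $\pi$---centered at the origin, with the same discrepancy and the same order $s=\ord_E(\fra)$ on $\widetilde\fra$; so $\widetilde E$ computes $\lct_0(\widetilde\fra)$.

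Setting $\widetilde\frq=\{h\in k[x_1,\dots,x_n]\mid\ord_{\widetilde E}(h)>s\}$, I verify the algebraic congruence $\widetilde\fra+\widetilde\frq=\widetilde\frb+\widetilde\frq$ as follows. For each generator $f_\ell$ the hypothesis furnishes a decomposition $f_\ell=\sum_m c_{\ell m}g_m+h_\ell$ with $c_{\ell m}\in R$ and $h_\ell\in\frq$; truncating all terms modulo $\frm^M$ and using $\frm^M\cap k[x_1,\dots,x_n]\subseteq\widetilde\frq$ (which holds since $Md_1>s$) shows $\widetilde f_\ell\in\widetilde\frb+\widetilde\frq$, and the reverse inclusion follows by symmetry. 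After shrinking to a Zariski neighborhood $U\subseteq\A^n$ of the origin on which $\widetilde E$ computes $\lct(\widetilde\fra|_U)=\lct_0(\widetilde\fra)$, Theorem~\ref{thm:intro:m-adic-semicont:ideals} applied to $(\widetilde\fra|_U,\widetilde\frb|_U,\widetilde E)$ yields $\lct(\widetilde\frb|_U)=\lct(\widetilde\fra|_U)$, and unwinding the identifications produces $\lct(\frb)=\lct(\fra)$. The main obstacles are the $\frm$-adic control of the resolution data (the identity $\widetilde\fra\cdot\cO_Y=\fra\cdot\cO_Y$) and the algebraic realization of $E$ as $\widetilde E$; once those two steps are secured, the rest is formal bookkeeping about truncations and symmetries.
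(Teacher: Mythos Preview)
Your overall strategy---pass to polynomial ideals on $\A^n$, transfer $E$ to a divisor $\widetilde E$ over $\A^n$, and invoke Theorem~\ref{thm:intro:m-adic-semicont:ideals}---is the same as the paper's. The gap is in the claimed identity $\widetilde\fra\cdot\cO_Y=\fra\cdot\cO_Y$ and its consequence $\lct(\widetilde\fra\cdot R)=\lct(\fra)$ (and the analogue for $\frb$). Your argument only controls $\ord_{E_i}$ for those $E_i$ with $d_i\geq 1$, and ``discarding'' the remaining components is not legitimate: if $V(\fra)$ has a positive-dimensional component, its strict transform $E_j$ has $d_j=0$ and $a_j>0$, and truncation modulo $\frm^M$ gives no information along $E_j$. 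Concretely, take $R=k\llbracket x,y\rrbracket$ and $\fra=h\cdot\frm$ with $h\in\frm$ a non-polynomial power series of order one; then $\lct(\fra)=1$ is computed by the exceptional divisor of the blowup at the origin, so the hypothesis holds, but on that blowup $\fra\cdot\cO_Y$ and $\widetilde\fra\cdot\cO_Y$ involve the strict transforms of $V(h)$ and of $V(\widetilde h)$, which are distinct (indeed tangent) curves. Thus $\pi$ is not a common log resolution with ``identical numerical data,'' and the equality $\lct(\widetilde\fra\cdot R)=\lct(\fra)$ is unjustified---in fact it is itself an instance of the corollary you are proving, so you cannot invoke it.

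The paper avoids this by a small but essential change: rather than truncating generators, it works with the $\frm$-primary ideals $\fra+\frm^N$ and $\frb+\frm^N$, setting $\widetilde\fra_N=(\fra+\frm^N)\cap k[x_1,\dots,x_n]$, so that $\widetilde\fra_N\cdot R=\fra+\frm^N$ exactly and hence $\lct_0(\widetilde\fra_N)=\lct(\fra+\frm^N)$ on the nose. One checks directly from the hypothesis on $E$ that $\lct(\fra+\frm^N)=\lct(\fra)$ and that the base-changed divisor $F$ computes $\lct_0(\widetilde\fra_N)$ once $N>\ord_E(\fra)$; Theorem~\ref{thm:intro:m-adic-semicont:ideals} then gives $\lct_0(\widetilde\frb_N)=\lct_0(\widetilde\fra_N)$. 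The conclusion for $\frb$ follows from $\lct(\frb)=\lim_N\lct(\frb+\frm^N)$, and this limit is genuinely needed, since for $\frb$ no divisor computing $\lct(\frb)$ is given and no exact identification of the kind you assert is available.
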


\begin{proof}
 It is enough to show that $\lct(\frb+\frm^N)=\lct(\fra+\frm^N)$
for all $N\gg 0$, where
$\frm$ denotes the maximal ideal in $R$
(we use the fact that $\lct(\frb)=\lim_{N\to\infty}\lct(\frb+\frm^N)$ and 
$\lct(\fra)=\lim_{N\to\infty}\lct(\fra+\frm^N)$, see \cite[Proposition 2.5]{dFM}).
Since the center of $E$ is equal to the closed point, there is a divisor $F$ over ${\mathbf A}^n$
with center the origin such that $E$ is obtained from $F$  by base-change with respect to
$\Spec(R)\to {\mathbf A}^n$. If $\widetilde{\fra}_N:=(\fra+\frm^N)\cap k[x_1,\ldots,x_n]$ and 
$\widetilde{\frb}_N:=(\frb+\frm^N)\cap k[x_1,\ldots,x_n]$,
then $\fra+\frm^N=\widetilde{\fra}_N\cdot R$ and $\frb+\frm^N=\widetilde{\frb}_N\cdot R$.
Hence
$\lct(\fra+\frm^N)=\lct_0(\widetilde{\fra}_N)$ and $\lct(\frb+\frm^N)=\lct_0(\widetilde{\frb}_N)$
(see, for example, \cite[Corollary 2.8]{dFM}).

On the other hand, we have $\lct(\fra+\frm^N)\geq \lct(\fra)$ for every $N$, and
$\lct(\fra+\frm^N)\leq\lct(\fra)$ for $N>\ord_E(\fra)$. It follows that for such $N$ we have
$\lct(\fra+\frm^N)=\lct(\fra)$, and furthermore, $E$ computes $\lct(\fra+\frm^N)$. 
Therefore $F$ computes $\lct_0(\widetilde{\fra}_N)$.
If $N>\ord_E(\fra)$, then $\ord_F(\widetilde{\fra}_N)=\ord_E(\fra)$, and 
$$(x_1,\ldots,x_n)^N\subseteq\widetilde{\frq}:=\{h\in k[x_1,\ldots,x_n]\mid
\ord_F(h)>\ord_F(\widetilde{\fra}_N)\}=\frq\cap k[x_1,\ldots,x_n].$$
We deduce that $\widetilde{\frb}_N+\widetilde{\frq}=\widetilde{\fra}_N+\widetilde{\frq}$, hence by
 Theorem~\ref{thm:intro:m-adic-semicont:ideals}
 we have $\lct_0(\widetilde{\frb}_N)=\lct_0(\widetilde{\fra}_N)$.
 We conclude that $\lct(\frb+\frm^N)=\lct(\fra+\frm^N)$ for all $N\gg 0$, and therefore
 $\lct(\frb)=\lct(\fra)$.
\end{proof}

\section{Generic limits of sequences of ideals}\label{sect:gen-limits}

In this section we review the construction from \cite{Kol1}, extending it
from sequences of power series to sequences of ideals.
In fact, we will need a version dealing with several such sequences.
The goal is to associate to these sequences of ideals in a fixed polynomial ring
or ring of power series, corresponding ``limit" ideals through a sequence
of $\frm$-adic approximations and field extensions.

For the sake of notation we only treat the case of two
sequences. This is the only case needed in the paper. It will be
however clear that the construction can be carried out for any given
number of sequences. We also note that by 
 taking the second sequence to be constant,
the construction given below reduces
in particular to a construction of generic limits for just one
sequence. Furthermore, the assertions in Proposition~\ref{prop:gen-limit:lct-E}
and Corollary~\ref{cor:lct=lim} below reduce to statements about one sequence
by taking $q=0$.

Let $R = k \llbracket x_1,\dots,x_n\rrbracket$
be the ring of formal power series in $n$ variables
with coefficients in an algebraically closed field $k$, and
let $\frm$ be its maximal ideal.
If $k\subset K$ is a field extension, then we put
$R_K :=  K \llbracket x_1,\dots,x_n\rrbracket$
and $\frm_K:= \frm\. R_K$.

For every $d \ge 1$, we consider the quotient homomorphism $R \to R/\frm^d$.
We identify the ideals in $R/\frm^d$ with the ideals in $R$ containing $\frm^d$.
Let $\cH_d$ be the Hilbert scheme parametrizing the ideals in $R/\frm^d$, with the
reduced scheme structure. Since $\dim_k(R/\frm^d)<\infty$, $\cH_d$ is an algebraic variety.
Note that  for every field extension $K$ of $k$, the $K$-valued points of
$\cH_d\times\cH_d$ correspond to pairs 
of ideals in $R_K$ containing $\frm_K^d$. 
Mapping a pair of ideals in $R/\frm^{d}$ to the pair consisting of their images in $R/\frm^{d-1}$ gives a surjective map
$t_d\colon\cH_d\times\cH_d\to \cH_{d-1}\times\cH_{d-1}$. This is not a morphism. However, by Generic Flatness
we can cover $\cH_d\times\cH_d$ by disjoint locally closed subsets  such that
the restriction of $t_d$ to each of these subsets is a morphism. In particular, for every
irreducible closed subset $Z\subseteq\cH_d\times\cH_d$, the map $t_d$ induces a rational map
$Z\rat \cH_{d-1}\times\cH_{d-1}$.

Suppose now that $(\fra_i)_{i \in I_0}$ and $(\frb_i)_{i\in I_0}$
are sequences of ideals in $R$ indexed by the set $I_0 = \Z_+$.
We consider sequences of irreducible closed subsets $Z_d\subseteq \cH_d\times\cH_d$ for $d\geq 1$
such that 
\begin{enumerate}
\item[$(\star)$] For every $d\geq 1$, the projection $t_{d+1}$ induces a dominant rational map $\phi_{d+1}\colon
Z_{d+1}\rat Z_{d}$.
\item[$(\star\star)$] For every $d\geq 1$, there are infinitely many $i$ with 
$(\fra_i+\frm^d,\frb_i+\frm^d)\in Z_d$, and the set of such 
$(\fra_i+\frm^d,\frb_i+\frm^d)$ is dense in $Z_d$.
\end{enumerate}
Given such a sequence $(Z_d)_{d\geq 1}$, 
we define inductively nonempty  open subsets $Z^\o_d\subseteq
Z_d$, and a nested sequence of infinite subsets
$$I_0\supseteq I_1\supseteq I_2\supseteq\cdots,$$ 
as follows. We put $Z^\o_1=Z_1$ and
$I_1=\{i\in I_0\mid(\fra_i+\frm,\frb_i+\frm)\in Z_1^{\o}\}$. For $d\geq 2$, let $Z^\o_d=\phi_d^{-1}(Z^\o_{d-1})
\subseteq {\rm Domain}(\phi_d)$ and $I_d=\{i\in I_0\mid (\fra_i+\frm^d,
\frb_i+\frm^d)\in Z^\o_d\}$.
It follows by induction on $d$ that $Z^\o_d$ is open in $Z_d$, and condition $(\star\star)$ implies
that each $I_d$ is infinite. Furthermore, it is clear that $I_d\supseteq I_{d+1}$.

Sequences $(Z_d)_{d\geq 1}$ satisfying $(\star)$ and $(\star\star)$ can be constructed as follows. 
We first choose a minimal irreducible closed subset $Z_1 \subseteq \cH_1\times\cH_1$ with the property that it contains $(\fra_i + \frm,\frb_i+\frm)$ for infinitely many indices $i \in I_0$.
We set $J_1 = \{ i \in I_0 \mid (\fra_i + \frm,\frb_i+\frm) \in Z_1 \}$.
By construction, $J_1$ is an infinite set and $Z_1$ is the closure of
$\{(\fra_i+\frm,\frb_i+\frm)\mid i\in I_1\}$.
Next, we choose a minimal closed subset $Z_2 \subseteq \cH_2\times\cH_2$ that contains
$(\fra_i + \frm^2,\frb_i+\frm^2)$ for infinitely many $i$ in $J_1$ (note that by minimality, $Z_2$ is irreducible).
 By construction, the set
 $J_2=\{i\in J_1\mid (\fra_i+\frm^2,\frb_i+\frm^2)\in Z_2\}$ is infinite, and 
$Z_2$ is the closure of $\{(\fra_i+\frm^2,\frb_i+\frm^2)\mid i\in J_2\}$.
As we have seen, $t_2$ induces a rational map
$\phi_2\colon Z_2 \rat Z_1$.
Note that by the minimality
in the choice of $Z_1$, the rational map $\phi_2$ is dominant.
Repeating this process we select a sequence $(Z_d)_{d\geq 1}$ that satisfies
$(\star)$ and $(\star\star)$ above. 

Suppose now that we have a sequence $(Z_d)_{d\geq 1}$ with these two properties.
The rational maps $\phi_d$ induce a nested sequence of function fields $k(Z_d)$. Let
$K:=\bigcup_{d \ge 1} k(Z_d)$.
Each morphism $\Spec(K)\to Z_d\subseteq\cH_d\times\cH_d$ 
corresponds to a pair of ideals $\fra'_d$ and $\frb'_d$ in $R_K$ containing $\frm_K^d$, and the compatibility between these morphisms
implies that there are (unique) ideals $\fra$ and $\frb$ in $R_K$ such that 
 $\fra'_d=\fra+\frm_K^d$ and $\frb'_d=\frb+\frm_K^d$ for all $d$.

\begin{definition}
With the above notation,
we say that the pair of ideals $(\fra,\frb)$ is {\it a generic limit}
of the sequence of pairs of ideals $(\fra_i,\frb_i)_{i \ge 1}$.
\end{definition}

\begin{remark}
The reader may compare the above construction with a similar one that can be used to show
that every sequence $(x_i)_{i \geq 1}$, with all $x_i$ in a closed bounded interval $L_0=[a,b]$,
contains a convergent subsequence. In that case, one also constructs by induction 
closed bounded intervals $L_d=[u_d,w_d]$ with $L_d\subseteq L_{d-1}$ and
$(w_d-u_d)<\epsilon_d$ (for some sequence $\epsilon_d$ converging to zero), and infinite subsets
$I_d\subseteq I_{d-1}\subseteq I_0=\Z_+$, such that $x_i\in L_d$ for all $i\in I_d$. 
With this notation, it is then clear that $(x_i)_{i\geq 1}$ contains a subsequence converging to
$\sup_du_d=\inf_dw_d$.
\end{remark}

We list in the next lemma some easy properties of generic limits.
The proof is straightforward, so we omit it.

\begin{lemma}\label{lemma:gen-limit:basic-properties}
Let $(\fra_i)_{i\geq 1}$ and $(\frb_i)_{i\geq 1}$ be sequences of ideals in $R$, and let 
$(\fra,\frb)$  be a generic limit as above, with $\fra,\frb\subseteq R_K$. 
\begin{enumerate}
\item[i)] If $\frb_i = \frc$ for every $i$, where $\frc \subseteq R$
is a fixed ideal, then $\frb=\frc\cdot R_K$. 
\item[ii)]  If $q\geq 1$ is such that $\fra_i \subseteq \frm^q$ for every $i$, then $\fra \subseteq \frm^q_K$. 
\item[iii)] If $q\geq 1$ is such that $\fra_i\not\subseteq\frm^q$ for every $i$, then 
$\fra\not\subseteq\frm_K^q$.
\item[iv)] If $\fra=(0)$, then for every $q\geq 1$ there are infinitely many $d$ such that 
$\fra_d\subseteq\frm^q$. 
\end{enumerate}
\end{lemma}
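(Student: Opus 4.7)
The plan is to treat all four parts via a single principle: the pair $(\fra + \frm_K^d, \frb + \frm_K^d)$ represents the $K$-point of $Z_d$ obtained by base-changing its generic point $\eta_d$ along $k(Z_d) \hookrightarrow K$, while by $(\star\star)$ the pairs $(\fra_i + \frm^d, \frb_i + \frm^d)$ form a dense subset of $Z_d$. Hence any closed condition on $\cH_d \times \cH_d$ satisfied by each of the dense pairs is also satisfied by $\eta_d$ (and so by its $K$-extension), while any open condition satisfied by $\eta_d$ is already satisfied by infinitely many of the dense pairs. Each of the four assertions then reduces to choosing the correct closed or open subscheme and reading off the conclusion.

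For (i), each dense pair projects to the single point $[\frc + \frm^d]$ under the second projection $\cH_d \times \cH_d \to \cH_d$; hence $Z_d \subseteq \cH_d \times \{[\frc + \frm^d]\}$, giving $\frb + \frm_K^d = \frc R_K + \frm_K^d$ for every $d$, and Krull's intersection theorem in the Noetherian local ring $R_K$ yields $\frb = \frc R_K$. For (ii), the locus $\{I \in \cH_d : I \subseteq \frm^q/\frm^d\}$ is closed in $\cH_d$, being cut out by the linear condition that the composite $I \hookrightarrow R/\frm^d \twoheadrightarrow R/\frm^q$ vanishes; taking $d \geq q$, the same density argument yields $\fra + \frm_K^d \subseteq \frm_K^q$, and hence $\fra \subseteq \frm_K^q$.

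For (iii), dually, for $d \geq q$ every dense pair lies in the complementary open subset of $\cH_d \times \cH_d$, and the generic point of the irreducible $Z_d$ lies in every dense open subset; this gives $\fra + \frm_K^d \not\subseteq \frm_K^q$, and since $\frm_K^d \subseteq \frm_K^q$, a witness $f + g \notin \frm_K^q$ with $f \in \fra$ and $g \in \frm_K^d$ forces $f \notin \frm_K^q$. Finally for (iv), the hypothesis $\fra = 0$ forces the first coordinate of the $K$-point of $Z_d$ to be the base-change of $[\frm^d] \in \cH_d$; hence the image of $\eta_d$ under the first projection equals $[\frm^d]$, and by irreducibility of $Z_d$ the same holds at every point of $Z_d$, in particular at each of the infinitely many dense pairs, giving $\fra_i \subseteq \frm^d$. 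For $d \geq q$ this yields the claim.

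The only subtleties I foresee are routine: verifying the closedness of the incidence loci on $\cH_d$, and bookkeeping the passage between the $k(Z_d)$-point $\eta_d$ and its $K$-extension. Both follow from standard properties of Hilbert/Grassmann schemes and of directed unions of function fields, so I do not anticipate any real obstacle, consistent with the authors' remark that the proof is straightforward.
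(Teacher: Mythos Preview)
Your proposal is correct. The paper omits the proof entirely, remarking only that it is straightforward; your argument---using that the $K$-point of $Z_d$ is the base-change of the generic point, together with density from $(\star\star)$ to transfer closed (resp.\ open) conditions from the $(\fra_i+\frm^d,\frb_i+\frm^d)$ to $\eta_d$ (resp.\ back)---is exactly the kind of routine verification the authors had in mind, and each of the four parts is handled cleanly.
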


In the following proposition we keep the notation used in the definition of generic limits.
Recall that we have also defined the nested sequence of infinite sets $(I_d)_{d\geq 1}$. 

\begin{proposition}\label{prop:gen-limit:lct-E}
Let $\fra,\frb \subseteq R_K$ be such that $(\fra,\frb)$ is a generic limit
of the sequence $(\fra_i, \frb_i)_{i\geq 1}$ of pairs of ideals in $R$.
Assume that $\fra_i,\frb_i \ne R$ for all $i$. 
For every $d$ there is an infinite subset
$I_d^\o \subseteq I_d$ such that for all nonnegative integers $p$ and $q$
$$
\lct((\fra + \frm_K^d)^p\cdot (\frb+\frm_K^d)^q)
 = \lct((\fra_i + \frm^d)^p\cdot(\frb_i+\frm^d)^q)
\quad\text{for every $i \in I_d^\o$.}
$$
Moreover, if $E$ is a divisor over $\Spec(R_K)$, with center at the closed point and computing
$\lct(\fra^p\cdot\frb^q)$ for some nonnegative integers $p$ and $q$, then there is an integer 
$d_E$ such that
for every $d \ge d_E$ the following holds: there is an infinite subset
$I_d^E \subseteq I_d^\o$, and for every $i\in I_d^E$
a divisor $E_i$ over $\Spec(R)$ computing $\lct((\fra_i+\frm^d)^p\cdot (\frb_i+\frm^d)^q)$, such that
$\ord_E(\fra+\frm_K^d)=\ord_{E_i}(\fra_i+\frm^d)$ and $\ord_{E}(\frb+\frm_K^d)=
\ord_{E_i}(\frb_i+\frm_K^d)$.
\end{proposition}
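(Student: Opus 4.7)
The overall strategy is to construct, for each $d$, a log resolution $\pi\colon Y\to\Spec(R_K)$ of $(\fra+\frm_K^d)(\frb+\frm_K^d)$ that spreads out to a family of log resolutions of $(\fra_i+\frm^d)(\frb_i+\frm^d)$ over $\Spec(R)$ for infinitely many $i\in I_d$, all sharing the same numerical data $(a_j,b_j,k_j)$ along their components. Both parts of the proposition then follow at once from \eqref{eq0}, since the expression $\min_{pa_j+qb_j>0}(k_j+1)/(pa_j+qb_j)$ depends only on the tuple $(a_j,b_j,k_j)$ and thus matches between generic and special fibers for every choice of $(p,q)$ simultaneously.

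For part one, I would fix such a log resolution $\pi\colon Y\to\Spec(R_K)$ of $(\fra+\frm_K^d)(\frb+\frm_K^d)$ and record its numerical data. Because $\fra+\frm_K^d$ and $\frb+\frm_K^d$ descend to the function field $k(Z_d)\subseteq K$, so does the resolution. Standard spreading-out arguments---generic flatness, generic smoothness, plus openness of the SNC condition---extend $\pi$ to a proper morphism $\pi_U\colon\mathcal{Y}\to\mathcal{X}_U$ over an open subset $U\subseteq Z_d$ whose closed-point fibers are log resolutions of the corresponding pairs of ideals with identical numerical data. Setting $I_d^\o:=\{i\in I_d\mid(\fra_i+\frm^d,\frb_i+\frm^d)\in U\}$, the density condition $(\star\star)$ ensures $I_d^\o$ is infinite, and \eqref{eq0} applied on each fiber yields the claimed equality of mixed lct's uniformly in $(p,q)$.

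For part two, I would first show that for $d$ sufficiently large one has $\lct((\fra+\frm_K^d)^p(\frb+\frm_K^d)^q)=\lct(\fra^p\frb^q)$: the inclusion $(\fra+\frm_K^d)^p(\frb+\frm_K^d)^q\subseteq\fra^p\frb^q+\frm_K^d$ gives $\geq\lct(\fra^p\frb^q+\frm_K^d)$, which converges to $\lct(\fra^p\frb^q)$ by \cite[Proposition~2.5]{dFM}, and the reverse inequality is immediate. Since $E$ has center at the closed point, $\ord_E(\frm_K^d)\geq d$, so for $d\geq d_E$ with $d_E$ large enough we have $\ord_E(\fra+\frm_K^d)=\ord_E(\fra)$ and $\ord_E(\frb+\frm_K^d)=\ord_E(\frb)$; on any log resolution of $(\fra+\frm_K^d)(\frb+\frm_K^d)$ that realizes $E$ as a component $F_{j_0}$, the ratio $(k_{j_0}+1)/(pa_{j_0}+qb_{j_0})=\lct(\fra^p\frb^q)$ coincides with the overall lct of the perturbed product, so $E$ computes it. Applying the spreading-out step to this refined resolution yields an open $U'\subseteq U$, and I define $I_d^E:=\{i\in I_d^\o\mid(\fra_i+\frm^d,\frb_i+\frm^d)\in U'\}$. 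On each fiber the component $F_{j_0}$ specializes to a divisor $E_i$ over $\Spec(R)$ with the same numerical data as $E$, so $E_i$ computes the corresponding truncated lct with $\ord_{E_i}(\fra_i+\frm^d)=a_{j_0}=\ord_E(\fra+\frm_K^d)$ and similarly for $\frb$.

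The main technical obstacle is the spreading-out itself, since $\Spec(R_K)$ is not of finite type over $k$ and the resolution $\pi$ lives in an ostensibly infinite-dimensional ambient. The key reason the construction succeeds is that the ideals $\fra+\frm_K^d$ and $\frb+\frm_K^d$ are determined by their images in the finite-dimensional $k$-vector space $R/\frm^d$, so the resolution and its numerical data descend to a family of finite type over $Z_d$ to which the usual generic-openness theorems apply. A secondary subtlety in part two is that $E$ is a priori defined only over the full field $K$, not over $k(Z_d)$; but for $d$ large enough, the finitely many pieces of data specifying $E$ up to the orders along $\fra+\frm_K^d$ and $\frb+\frm_K^d$ descend to some $k(Z_{d''})$ with $d''\geq d$, and the dominant rational maps $Z_{d''}\rat Z_d$ from $(\star)$ allow the spreading-out to be carried back to produce an infinite set of indices in $I_d$, as required.
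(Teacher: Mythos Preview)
Your strategy is the same as the paper's: build a family of log resolutions over a dense open in $Z_d$ with constant numerical data $(a_j,b_j,k_j)$, and read off both assertions from \eqref{eq0}. There is, however, one technical point where your writeup is incomplete and where the paper is more careful.

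You begin with a log resolution over $\Spec(R_K)$ and assert that it descends and spreads out to a family over $U\subseteq Z_d$. The obstacle you flag in your last paragraph is real, but your resolution of it does not quite close the gap: the issue is that $R_K=K\llbracket x_1,\ldots,x_n\rrbracket$ is \emph{not} $R\otimes_k K$, so there is no scheme over $Z_d$ whose closed fibers are $\Spec R$ and whose generic fiber is $\Spec R_K$, and hence nothing for ordinary spreading-out to apply to. The paper's fix is to pass to affine space before doing anything else: since the truncated ideals contain $\frm_K^d$, they come from ideals in $K[x_1,\ldots,x_n]$ supported at the origin with the same lct (\cite[Corollary~2.8]{dFM}), and $\AAA^n_K=\AAA^n_k\times_k\Spec K$ does base-change cleanly. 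One then resolves the universal pair of ideals on the $k$-variety $Z_d\times\AAA^n_k$ directly and uses Generic Smoothness to find the open $U_d$. For the second assertion, the same passage to $\AAA^n$ turns $E$ into a divisor $F$ over $\AAA^n_K$; since $F$ is of finite type it is defined over some fixed $k(Z_{d_E})$, so for every $d\geq d_E$ it already lives over $k(Z_d)\supseteq k(Z_{d_E})$ and can be realized as a component of a family log resolution over an open $W_d\subseteq Z_d$. Your detour through a larger $d''\geq d$ and the rational maps $Z_{d''}\rat Z_d$ is then unnecessary.
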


In the second assertion in the proposition, both $d_E$ and the sets $I_d^E$
also depend on $p$ and $q$, while $E_i$ also depends on $d$.

\begin{proof}
Note that every ideal of the form $\frc+\frm^d$ can be considered as the ideal of 
a scheme on $\AAA^n_k$ supported at the origin, and the log canonical threshold
computed in $\Spec(R)$ is the same as when computed in $\AAA_k^n$ 
(cf. \cite[Corollary~2.8]{dFM}). Of course, the same holds if we replace $k$ by $K$.
Whenever we can, we
adopt this alternative point of view, since base change 
works better in this setting (by base change an
affine space becomes another affine space).

On $\cH_d\times\AAA_k^n$ we have the universal family of ideals $\cI$.
Pulling this back via the two projections $\cH_d\times\cH_d\times\AAA_k^n\to
\cH_d\times\AAA_k^n$,
and then restricting to $Z_d\times\AAA_k^n$ gives the ideals $\cI_d$ and 
$\cJ_d$ on $Z_d\times\AAA_k^n$.
Let $\mu_d \colon Y_d \to Z_d \times \AAA_k^n$ be 
a log resolution of the product $\cI_d\cdot\cJ_d$, and let
${\mathcal E}$ be the relevant simple normal crossings divisor on $Y_d$.
By Generic Smoothness, there is a nonempty open subset
$U_d \subseteq Z_d$ such that the induced map $Y_d\to Z_d$ is smooth over $U_d$, and furthermore,
${\mathcal E}$ has relative simple normal crossings over $U_d$. 
In this case, the fiber of $Y_d\to Z_d$ over a point in $U_d$ corresponding to a pair of 
ideals $(\frc_1,\frc_2)$
gives a log resolution of the ideal $\frc_1\cdot \frc_2$ in $\AAA^n_k$.
It follows that for every $p$ and $q$, the log canonical threshold
$\lct(\frc_1^p\cdot\frc_2^q)$ is independent of the point $(\frc_1,\frc_2)\in U_d$. Moreover, it is equal to the similar log canonical threshold computed for the pair of ideals parametrized by the generic point of $Z_d$.
These are ideals in $k(Z_d)[x_1,\ldots,x_n]$ whose extensions to 
$K[x_1,\ldots,x_n]$ are $\fra+\frm^d_K$ and $\frb+\frm_K^d$.
We thus take $I_d^\o \subset I_d$ to consist of those $i$ for which
$(\fra_i + \frm^d,\frb_i+\frm^d)$ is in $U_d$. 
Condition $(\star\star)$ on the sequence $(Z_d)_{d\geq 1}$ implies that 
$I_d^\o$ is an infinite set.

For the second assertion in the proposition, observe first that 
since $E$ has center equal to the closed point, 
there is a divisor $F$
over $\AAA_K^n$ with center at the origin, such that $E$ is obtained from $F$ by 
base-change with respect to $\Spec(R_K)\to\AAA_K^n$. 
Given an ideal $\frc+\frm_K^d\subset R_K$, the divisor $E$ computes the log canonical threshold
of this ideal if and only if $F$ computes the log canonical threshold of the corresponding ideal
in $K[x_1,\ldots,x_n]$.

Note that the divisor $F$,
a priori defined over $K$, is in fact defined over a subextension $L$ of $K/k$,
of finite type over $k$.
Let $d_E>\ord_E(\fra^p\cdot\frb^q)$ be an integer such that
 $F$ is defined over
$k(Z_{d_E})$.
For $d\geq d_E$, we have $\lct((\fra+\frm_K^d)^p\cdot (\frb+\frm_K^d)^q)=
\lct(\fra^p\cdot\frb^q)$, and $E$ computes both these log canonical thresholds: for this one argues as in the beginning of the proof of Theorem~\ref{thm:m-adic-semicont},
observing that in this case we have $\lct(\fra^p\cdot\frb^q)\leq\lct((\fra+\frm_K^d)^p\cdot
(\frb+\frm_K^d)^q)$
due to the inclusion
 $\fra^p\cdot\frb^q\subseteq(\fra+\frm_K^d)\cdot(\frb+\frm_K^d)^q$.

On the other hand, for every such $d$ we can find a nonempty open subset $W_d \subseteq Z_d$
and a log resolution $\nu_d\colon Y_d'\to W_d\times \AAA_k^n$ 
of the restriction of $\cI_d\cdot\cJ_d$ to $W_d\times\AAA^n_k$,
such that $F$  
is obtained from a divisor ${\mathcal F}'$ on $Y_d'$
by base-change with respect to the composition 
$$\AAA_K^n\to \AAA_{k(Z_d)}^n
\to W_d\times\AAA_k^n.$$ 
Arguing as in the first part of the proof, we see that after possibly replacing 
$W_d$ by a smaller open subset, we may assume that 
$Y_d'$ is smooth over $W_d$, and furthermore, that the relevant divisor ${\mathcal E}'$
has relative simple normal crossings over $W_d$. Note that ${\mathcal F}'$ is a component of 
${\mathcal E}'$.

Let $I_d^E:=\{i \in I_d^\o \mid (\fra_i + \frm^d,\frb_i+\frm^d) \in W_d\}$. Again, condition $(\star\star)$ on
the sequence $(Z_d)_{d\geq 1}$
implies that $I_d^E$ is infinite.
Since $F$ computes the log canonical threshold of the (extension to $K[x_1,\ldots,x_n]$
of the) suitable product corresponding to the pair of ideals parametrized
by the
generic point of $W_d$, it follows that if $i\in I_d^E$, and $F_i$ is a connected component of 
the fiber of ${\mathcal F}'$ over the point in $W_d$ representing $(\fra_i+\frm^d,\frb_i+\frm^d)$, then
$F_i$ computes $\lct((\fra_i+\frm^d)^p\cdot (\frb_i+\frm^d)^q)$. Moreover, we have
$\ord_F(\fra+\frm_K^d)=\ord_{F_i}(\fra_i+\frm^d)$ and 
$\ord_F(\frb+\frm_K^d)=\ord_{F_i}(\frb_i+\frm^d)$. If $E_i$ is obtained from $F_i$ by base-change
via $\Spec(R)\to \AAA^n_k$, then $E_i$ satisfies the requirement in the proposition.
\end{proof}

\begin{corollary}\label{cor:lct=lim}
With the above notation, for every sequence $(i_d)_{d \ge 1}$ with
$i_d \in I_d^\o$, we have $\lct(\fra^p\cdot \frb^q) = 
\lim_{d \to \infty} \lct(\fra_{i_d}^p\cdot\frb_{i_d}^q)$ for all nonnegative integers $p$ and $q$.
In particular, if the sequence $(\lct(\fra_i^p\cdot\frb_i^q))_{i \ge 1}$ is convergent, then it
converges to $\lct(\fra^p\cdot\frb^q)$.
\end{corollary}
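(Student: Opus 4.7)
The approach combines the two parts of Proposition~\ref{prop:gen-limit:lct-E} with the $\frm$-adic continuity of log canonical thresholds from \cite[Proposition~2.5]{dFM} and with Theorem~\ref{thm:intro:m-adic-semicont:ideals}.

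For the upper bound, the starting point is the elementary double inclusion
\[
\fra^p\frb^q \subseteq (\fra+\frm_K^d)^p(\frb+\frm_K^d)^q \subseteq \fra^p\frb^q + \frm_K^d,
\]
where the right-hand inclusion follows by expanding the product, since every cross term contains a factor from $\frm_K^d$. By monotonicity of $\lct$ and \cite[Proposition~2.5]{dFM}, this forces $\lct((\fra+\frm_K^d)^p(\frb+\frm_K^d)^q) \to \lct(\fra^p\frb^q)$ as $d\to\infty$. For $i_d\in I_d^\o$, the first half of Proposition~\ref{prop:gen-limit:lct-E} equates this with $\lct((\fra_{i_d}+\frm^d)^p(\frb_{i_d}+\frm^d)^q)$, so the inclusion $\fra_{i_d}^p\frb_{i_d}^q\subseteq (\fra_{i_d}+\frm^d)^p(\frb_{i_d}+\frm^d)^q$ yields the bound $\limsup_d\lct(\fra_{i_d}^p\frb_{i_d}^q) \le \lct(\fra^p\frb^q)$.

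For the matching $\liminf$, the plan is to upgrade the preceding inequality to an equality by invoking Theorem~\ref{thm:intro:m-adic-semicont:ideals}. Fix a divisor $E$ over $\Spec(R_K)$ with center at the closed point computing $\lct(\fra^p\frb^q)$, and apply the second half of Proposition~\ref{prop:gen-limit:lct-E}. Once $d>\max(\ord_E(\fra),\ord_E(\frb))$, the orders $\ord_E(\fra+\frm_K^d)$ and $\ord_E(\frb+\frm_K^d)$ stabilize to $\ord_E(\fra)$ and $\ord_E(\frb)$, and the proposition produces, on an infinite subset of $I_d^\o$, divisors $E_{i_d}$ over $\Spec(R)$ with center at the closed point, computing $\lct((\fra_{i_d}+\frm^d)^p(\frb_{i_d}+\frm^d)^q)$ and matching these orders. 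Since $E_{i_d}$ is centered at the closed point, $\ord_{E_{i_d}}(\frm^d)\ge d$, so for $d$ large we have $\frm^d\subseteq \frq_{E_{i_d}}$, where
\[
\frq_{E_{i_d}}:=\{h\mid \ord_{E_{i_d}}(h)>\ord_{E_{i_d}}((\fra_{i_d}+\frm^d)^p(\frb_{i_d}+\frm^d)^q)\}.
\]
Combined with the sandwich $\fra_{i_d}^p\frb_{i_d}^q \subseteq (\fra_{i_d}+\frm^d)^p(\frb_{i_d}+\frm^d)^q\subseteq \fra_{i_d}^p\frb_{i_d}^q+\frm^d$, this verifies the hypothesis of Theorem~\ref{thm:intro:m-adic-semicont:ideals}, whose conclusion, being local around the closed point, globalizes on the local scheme $\Spec(R)$ and gives
\[
\lct(\fra_{i_d}^p\frb_{i_d}^q) = \lct((\fra_{i_d}+\frm^d)^p(\frb_{i_d}+\frm^d)^q).
\]
Passing to the limit as $d\to\infty$ then finishes the proof.

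The step most in need of justification is the existence of a divisor $E$ over $\Spec(R_K)$ with center at the closed point computing $\lct(\fra^p\frb^q)$, since in general the log canonical threshold may be attained only by divisors with higher-dimensional center (as already happens for $\fra=(x)\subseteq k\llbracket x,y\rrbracket$). A preliminary reduction to the zero-dimensional case, either by cutting down with general elements or by replacing $I_d^\o$ with the smaller but still infinite subset $I_d^E$ from Proposition~\ref{prop:gen-limit:lct-E}, appears to be needed; one then has to check that the resulting refinement is harmless for the application, which is where the hypothesis $\fra,\frb\ne(0)$ (cf.\ Lemma~\ref{lemma:gen-limit:basic-properties}(iv), handling the trivial case) enters.
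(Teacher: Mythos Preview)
Your argument has a genuine gap, and it is also far more elaborate than necessary. The paper's proof is essentially two lines: using \cite[Corollary~2.10]{dFM}, if two proper ideals $\frc,\frc'\subseteq R$ satisfy $\frc+\frm^d=\frc'+\frm^d$, then $|\lct(\frc)-\lct(\frc')|\le n/d$. Since $\fra^p\frb^q+\frm_K^d=(\fra+\frm_K^d)^p(\frb+\frm_K^d)^q+\frm_K^d$ (and similarly over $R$), the first part of Proposition~\ref{prop:gen-limit:lct-E} together with the triangle inequality gives $|\lct(\fra^p\frb^q)-\lct(\fra_{i_d}^p\frb_{i_d}^q)|\le 2n/d$ for every $i_d\in I_d^\o$. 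No appeal to Theorem~\ref{thm:intro:m-adic-semicont:ideals} or to the second half of Proposition~\ref{prop:gen-limit:lct-E} is needed, and the conclusion holds for \emph{every} choice of $i_d\in I_d^\o$, as the statement requires.

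Your route, by contrast, fails on two counts. First, the existence of a divisor $E$ over $\Spec(R_K)$ with zero-dimensional center computing $\lct(\fra^p\frb^q)$ is exactly the obstruction you identify, and you do not resolve it; your suggested ``preliminary reduction'' is precisely the nontrivial maneuver the paper later performs (via the auxiliary rational number $q$ and the twist by $\frm_K^r$) in the proof of Theorem~\ref{thm:T_n^sm}, not something that belongs in this corollary. Second, even granting such an $E$, your $\liminf$ argument produces the equality $\lct(\fra_{i_d}^p\frb_{i_d}^q)=\lct((\fra_{i_d}+\frm^d)^p(\frb_{i_d}+\frm^d)^q)$ only for $i_d$ in the infinite subset $I_d^E\subseteq I_d^\o$, whereas the corollary asserts convergence for \emph{every} sequence with $i_d\in I_d^\o$. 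Your $\limsup$ bound is correct and does hold for all such sequences, but without a matching $\liminf$ bound of the same generality the proof is incomplete. The one-line estimate above removes both difficulties at once.
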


\begin{proof}
Recall the following basic fact: if $\frc$ and $\frc'$ are proper ideals in $R$, 
with $\frc+\frm^d=\frc'+\frm^d$, then
$$|\lct(\frc)-\lct(\frc')|\leq \frac{n}{d}$$
(see \cite[Corollary~2.10]{dFM}). Note that this equality also holds when $\frc$
or $\frc'$ are zero. Of course, a similar result holds for ideals in $R_K$. It follows from 
Proposition~\ref{prop:gen-limit:lct-E}
that for every $d\geq 1$ we have
$$|\lct(\fra^p\cdot \frb^q)-\lct(\fra_{i_d}^p\cdot\frb_{i_d}^q)|
\leq |\lct(\fra^p\cdot\frb^q)-\lct((\fra+\frm_K^d)^p\cdot (\frb+\frm_K^d))|$$
$$+
|\lct((\fra_{i_d}+\frm^d)^p\cdot (\frb_{i_d}+\frm^d)^q)-\lct(\fra_{i_d}^p\cdot\frb_{i_d}^q)|
\leq\frac{2n}{d}.$$
The assertion in the proposition is an immediate consequence.
\end{proof}

\begin{remark}
It is clear that both the construction and the above results generalize in an obvious way to any finite number 
of sequences of ideals.
\end{remark}

\section{Log canonical thresholds on smooth varieties}

This section is devoted to the proof of Theorem~\ref{thm:intro:T_n^sm}.
For completeness, we also include the proof of the smooth case of Koll\'ar's Accumulation
Conjecture \cite{Kol2}, which is already known by the results in \cite{dFM,Kol1}:
the case of limits of decreasing sequences was first treated in \cite{dFM},
and the proof was completed in \cite{Kol1} where the the case of
(potential) limits of increasing sequences was also treated.

\begin{theorem}
\label{thm:T_n^sm}
For every $n$, the set $\cT_n^{\rm sm}$ satisfies the ascending chain condition,
and its set of accumulation points is $\cT_{n-1}^{\rm sm}$.
\end{theorem}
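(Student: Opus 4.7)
I will prove Theorem~\ref{thm:T_n^sm} by induction on $n$, with the base case $n=0$ being trivial. Assume the theorem for all $n' < n$. The plan is to deduce both assertions (ACC and the accumulation-point characterization) by a single generic-limit argument, splitting into cases according to the dimension of the center of a divisor computing the log canonical threshold of the limit.

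For the forward direction of the accumulation-point description, suppose $c \in [0,n]$ is the limit of a sequence $(c_i) \subseteq \cT_n^{\rm sm}$ with $c_i \ne c$ for all $i$. Write $c_i = \lct_0(\fra_i)$ with $\fra_i \subseteq \frm := (x_1,\ldots,x_n) \subset k[x_1,\ldots,x_n]$ and construct a generic limit $\fra \subseteq R_K := K\llbracket x_1,\ldots,x_n\rrbracket$ via Section~\ref{sect:gen-limits}; by Corollary~\ref{cor:lct=lim}, $\lct(\fra)=c$. Choose a divisor $E$ over $\Spec(R_K)$ computing $\lct(\fra)$ and split on $\dim$(center of $E$).

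If the center is the closed point, Proposition~\ref{prop:gen-limit:lct-E} supplies, for every sufficiently large $d$ and every $i$ in an infinite subset $I_d^E$ of indices, a divisor $E_i$ over $\Spec(R)$ with closed-point center, computing $\lct(\fra_i + \frm^d)$, and with $\ord_{E_i}(\fra_i + \frm^d) = \ord_E(\fra) =: v$. For $d > v$ one has $\frm^d \subseteq \frq_i := \{h \in R : \ord_{E_i}(h) > v\}$, so $\fra_i + \frq_i = (\fra_i + \frm^d) + \frq_i$, and Theorem~\ref{thm:intro:m-adic-semicont:ideals} yields $\lct(\fra_i) = \lct(\fra_i + \frm^d) = \lct(\fra + \frm_K^d) = c$ for every $i \in I_d^E$. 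This contradicts $c_i \ne c$ for infinitely many $i$, so this case is impossible. Hence $E$ must have center $Z$ of positive dimension. Localizing $R_K$ at the generic point of $Z$ and completing gives, via Cohen's structure theorem, a formal power series ring in $n - \dim Z < n$ variables over a field extension; the image of $\fra$ is a proper ideal, the image of $E$ has closed-point center, and a direct computation on a log resolution shows the lct is still $c$. A polynomial-ideal approximation and descent of the base field (following Section~\ref{sect:gen-limits} and \cite[Proposition~3.3]{dFM}) then place $c$ in $\cT_{n-\dim Z}^{\rm sm} \subseteq \cT_{n-1}^{\rm sm}$.

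For the reverse direction, if $c = \lct_0(\frb)$ with $\frb \subseteq k[x_1,\ldots,x_{n-1}]$, the ideals $\fra_m := \frb \cdot k[x_1,\ldots,x_n] + (x_n^m)$ satisfy $\lct_0(\fra_m) = c + \tfrac{1}{m}$ (by a weighted blowup computation with weights roughly $(m, \text{denominators of }\frb)$), producing a strictly decreasing sequence in $\cT_n^{\rm sm}$ accumulating at $c$. Finally, for ACC, suppose for contradiction $c_1 < c_2 < \cdots$ is strictly increasing in $\cT_n^{\rm sm}$ with limit $c$. The case analysis above applied to this sequence again forces the positive-dimensional-center case, hence descent yields $c \in \cT_{n-1}^{\rm sm}$; by exploiting Proposition~\ref{prop:gen-limit:lct-E}'s matching-of-orders conclusion, one shows that the entire sequence (not merely its limit) descends—up to a subsequence—to a strictly increasing sequence in $\cT_{n-\dim Z}^{\rm sm}$ with limit $c$, contradicting the induction hypothesis ACC in dimension $< n$. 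The main obstacle is precisely this last step: transferring not only the limit but the monotone structure of the sequence under the descent, which requires delicate use of the $\frm$-adic continuity bound $|\lct(\fra_i) - \lct(\fra_i + \frm^d)| \le n/d$ from \cite[Corollary~2.10]{dFM} together with the identity $\lct(\fra_i + \frm^d) = \lct(\fra + \frm_K^d)$ on the infinite sets $I_d^\o$, to ensure compatibility of the descended sequence with the strict-increase condition.
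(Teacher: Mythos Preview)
Your treatment of the accumulation-point statement is essentially correct and parallels the paper's: when some divisor $E$ computing $\lct(\fra)$ has center at the closed point, your use of Proposition~\ref{prop:gen-limit:lct-E} together with $\frm$-adic semicontinuity (you should invoke Corollary~\ref{formal_case} rather than Theorem~\ref{thm:intro:m-adic-semicont:ideals}, since you are working over the formal power series ring) correctly forces $c_i = c$ for infinitely many $i$; ruling this out, you localize along a positive-dimensional center to land in $\cT_{n-1}^{\rm sm}$. The reverse inclusion via $\fra_m = \frb + (x_n^m)$ is also fine.

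The ACC argument, however, has a genuine gap---precisely the one you identify as ``the main obstacle,'' and your suggested fix does not close it. Your inductive plan requires descending the strictly increasing sequence $(c_i)$ itself, not merely its limit, to some $\cT_{n-\dim Z}^{\rm sm}$. But there is no mechanism for this. The localization at the generic point of $Z$ happens on $\Spec(R_K)$ and produces a single ideal there; the individual $\fra_i$ live in $k\llbracket x_1,\dots,x_n\rrbracket$ and have no canonical lower-dimensional avatars. The tools you propose---the bound $|\lct(\fra_i)-\lct(\fra_i+\frm^d)|\le n/d$ and the equality $\lct(\fra_i+\frm^d)=\lct(\fra+\frm_K^d)$ on $I_d^\o$---only measure how close $c_i$ is to $c$; they do not place any $c_i$ into a lower-dimensional $\cT$-set. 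Nor does Proposition~\ref{prop:gen-limit:lct-E} help: the divisors $E_i$ it produces compute $\lct(\fra_i+\frm^d)$, whose zero locus is the closed point, so each $E_i$ necessarily has zero-dimensional center regardless of where $E$ sits.

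The paper avoids induction entirely for ACC by a different idea: one \emph{forces} the closed-point case. Setting $q=\max\{t\ge 0:\lct(\fra\cdot\frm_K^t)=\lct(\fra)\}=r/s$, the lemma preceding the proof shows that $\lct(\fra^s\cdot\frm_K^r)=\lct(\fra)/s$ is always computed by a divisor with center at the closed point. Having taken from the start the generic limit of the \emph{pair} $(\fra_i,\frm)$, one then runs your closed-point argument on the products $\fra_i^s\cdot\frm^r$ to obtain $\lct(\fra_i^s\cdot\frm^r)=\lct(\fra^s\cdot\frm_K^r)=c/s$ for infinitely many $i$; since always $\lct(\fra_i^s\cdot\frm^r)\le \lct(\fra_i)/s = c_i/s < c/s$, this is the contradiction. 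The positive-dimensional case is used only \emph{a posteriori}, once ACC is in hand, to identify the accumulation set: the repetition of $\lct(\fra_i^s\cdot\frm^r)$ for a strictly \emph{decreasing} sequence forces $q>0$, whence no divisor computing $\lct(\fra)$ has zero-dimensional center, and localization finishes.
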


We start with an easy lemma that can be used to replace an ideal by another ideal with the same log canonical threshold, and such that this log canonical threshold is computed by a divisor having a zero-dimensional center.

\begin{lemma}
Let $\fra$ be an ideal contained in the maximal ideal $\frm_K$ of $K\llbracket x_1,\ldots,x_n\rrbracket$.
We put $q := \max\{ t \ge 0 \mid \lct(\fra\.\frm_K^t) = \lct(\fra) \}$.
\begin{enumerate}
\item[i)] We have $q\in\QQ_{\geq 0}$.
\item[ii)] If we write $q=r/s$, for nonnegative integers $r$ and $s$, then $\lct(\fra^s\cdot\frm_K^r)=\frac{\lct(\fra)}{s}$, and this log canonical threshold is computed by a divisor with center equal to the closed point.
\item[iii)] We have $q=0$ if and only if $\lct(\fra)$ is computed by a divisor 
with center over the closed point.
\end{enumerate}
\end{lemma}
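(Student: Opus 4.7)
The plan is to work on a fixed log resolution of $\fra\cdot\frm_K$ and reduce the three parts to an elementary analysis of a finite min of linear-fractional functions in $t$. Fix a log resolution $\pi\colon Y\to\Spec(R_K)$ of $\fra\cdot\frm_K$, and write $\fra\cdot\cO_Y=\cO(-\sum a_iE_i)$, $\frm_K\cdot\cO_Y=\cO(-\sum m_iE_i)$ and $K_{Y/R_K}=\sum k_iE_i$. Two key observations drive everything:
\begin{itemize}
\item An exceptional $E_i$ has $m_i>0$ if and only if its center on $\Spec(R_K)$ is the closed point (since $V(\frm_K)$ is precisely the closed point); and since $\pi$ is proper and surjective, at least one such $E_i$ exists.
\item By the formula (\ref{eq0}), for every $t\ge 0$,
\[
\lct(\fra\cdot\frm_K^t)=\min_{a_i+tm_i>0}\frac{k_i+1}{a_i+tm_i}.
\]
\end{itemize}

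First I would use these observations to prove (i). The right-hand side above is a min of finitely many strictly decreasing (for $m_i>0$) or constant (for $m_i=0$) functions of $t$, each of the form $(k_i+1)/(a_i+tm_i)$, so it is continuous and weakly decreasing in $t$. Because some $m_i>0$, the minimum tends to $0$ as $t\to\infty$, while $\lct(\fra)>0$; hence the set $\{t\ge 0 : \lct(\fra\cdot\frm_K^t)=\lct(\fra)\}$ is a bounded interval $[0,q]$ with $q<\infty$. To see $q\in\QQ_{\ge 0}$, note that at $t=q$ the minimum must switch from a constant term $(k_j+1)/a_j$ (with $m_j=0$) to a strictly decreasing term $(k_i+1)/(a_i+tm_i)$ with $m_i>0$. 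Thus $q$ is the unique solution of $\frac{k_i+1}{a_i+qm_i}=\lct(\fra)$ for an appropriate $i$ with $m_i>0$, and this is visibly rational since $\lct(\fra)\in\QQ$ and $a_i,m_i,k_i\in\ZZ$.

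Next I would derive (ii) by the scaling identity
\[
\lct(\fra^s\cdot\frm_K^r)=\min_i\frac{k_i+1}{sa_i+rm_i}=\frac{1}{s}\min_i\frac{k_i+1}{a_i+(r/s)m_i}=\frac{\lct(\fra\cdot\frm_K^q)}{s}=\frac{\lct(\fra)}{s}.
\]
To see that some divisor with center the closed point computes this threshold, note that by the discussion of (i), at $t=q$ the minimum is achieved by some $E_i$ with $m_i>0$; the same $E_i$ achieves the minimum after the rescaling, and such an $E_i$ has center precisely the closed point.

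Finally, (iii) falls out of the same analysis: $q=0$ if and only if the minimum at $t=0$ is already attained by some $E_i$ with $m_i>0$, which is exactly the condition that $\lct(\fra)$ is computed by a divisor with center the closed point. I do not expect a real obstacle here; the only subtle point is ruling out $q=+\infty$, which is handled by noting that at least one $m_i$ is positive (as $\pi^{-1}$ of the closed point is nonempty and divisorial in a log resolution of $\fra\cdot\frm_K$).
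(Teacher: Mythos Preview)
Your argument is correct and takes essentially the same approach as the paper: both work on a fixed log resolution of $\fra\cdot\frm_K$ and analyze $\lct(\fra\cdot\frm_K^t)=\min_i(k_i+1)/(a_i+tm_i)$ as a function of $t$. The paper reaches the same conclusions slightly more directly by rewriting the condition $\lct(\fra\cdot\frm_K^t)\ge c$ as $t\le (k_i+1-ca_i)/(cb_i)$ for each $i$ with $b_i>0$, which immediately gives the closed formula $q=\min_{b_i>0}(k_i+1-ca_i)/(cb_i)$ and makes (i)--(iii) transparent; your ``switching'' description is the same computation viewed dynamically.
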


\begin{proof}
Let $\pi\colon Y\to X=\Spec\left(K\llbracket x_1,\ldots,x_n\rrbracket\right)$ be a log resolution of 
$\fra\cdot\frm_K$, and write $\fra\cdot\cO_Y=\cO(-\sum_ia_iE_i)$, $\frm_K\cdot\cO_Y=\cO_Y(-\sum_ib_iE_i)$, and $K_{Y/X}=\cO_Y(-\sum_ik_iE_i)$.
Let $I$ denote the set of those $i$ for which  $E_i$ has center equal to
 the closed point, that is, such that $b_i>0$.
 
 Let $c=\lct(\fra)$. Note that we have $\lct(\fra\cdot\frm_K^t)\leq c$ for every $t\geq 0$. Furthermore, 
 $\lct(\fra\cdot\frm_K^t)\geq c$ if and only if 
 $$k_i+1\geq c(a_i+tb_i)$$
 for all $i$. If $i\not\in I$, then $b_i=0$ and this inequality holds for all $t$. We conclude that 
 $$q=\min\left\{\frac{k_i+1-ca_i}{cb_i}\mid i\in I\right\}.$$
 This shows that $q\in\QQ$. Moreover, if $i\in I$ is such that this minimum is achieved, then
 $E_i$ computes $\lct(\fra^s\cdot\frm_K^r)$, and $E_i$ has center equal to the closed point.
 The assertion in iii) is clear.
\end{proof}

\begin{proof}[Proof of Theorem~\ref{thm:T_n^sm}]
Let $(c_i)_{i \ge 1}$ be a strictly monotone sequence with terms in $\cT_n^{\rm sm}$,
and let $c = \lim_{i \to \infty} c_i$ (the limit is finite,
since $\cT_n^{\rm sm}$ is bounded above by $n$). For every $i$ we can select an ideal
$\widetilde{\fra}_i  \subseteq (x_1,\ldots,x_n)\subset k[x_1,\dots,x_n]$ with 
$\lct_0(\widetilde{\fra}_i) = c_i$.
Let $\fra_i=\widetilde{\fra}_i\cdot k\llbracket x_1,\ldots,x_n\rrbracket$.
Consider a generic limit $(\fra,\frb)$ of the sequence of pairs of ideals $(\fra_i,\frm)_{i\geq 1}$,
constructed as in Section~\ref{sect:gen-limits},
with $\fra,\frb\subseteq K\llbracket x_1,\ldots,x_n\rrbracket$. Note that by 
Lemma~\ref{lemma:gen-limit:basic-properties}, we have $\fra\subseteq\frm_K$, and
$\frb=\frm_K$. 
Since $\lct(\fra_i)=\lct_0(\widetilde{\fra}_i)$ (see, for example, \cite[Proposition 2.9]{dFM}), it follows from
Corollary~\ref{cor:lct=lim} that $\lct(\fra) = c$. If $c=0$, then the sequence $(c_i)_{i\geq 1}$ can't be strictly increasing. Furthermore, we have $0\in \cT_{n-1}^{\rm sm}$ (note indeed that
$n>0$), hence this case is clear, and we may assume that $c>0$. In particular, $\fra\neq (0)$.

Let $q$ be the rational number attached to $\fra$ as in the lemma, and write $q=r/s$, 
with $r$ and $s$ nonnegative integers.
By construction, we have
$$
\lct(\fra^s\.\frm_K^r) = \frac 1s \lct(\fra).
$$
On the other hand, we certainly have
$$
\lct(\fra_i^s\cdot\frm^r) \leq \frac 1s \lct(\fra_i) \quad\text{for every $i$}.
$$
Note in particular that if $(c_i)_{i \ge 1}$ is a strictly increasing sequence, then
$\lct(\fra_i^s\cdot\frm^r) < \lct(\fra^s\cdot\frm^r)$ for every $i$.

By the choice of $q$, $\lct(\fra^s\cdot\frm_K^r)$ is computed by a divisor
$E$ which lies over the closed point of $\Spec(K\llbracket x_1,\dots,x_n\rrbracket)$.
Fix any $d \geq d_E$, with $d_E$ associated to $E,s,r$ and to the sequence $(\fra_i,\frm)_{i\geq 1}$ by
Proposition~\ref{prop:gen-limit:lct-E}. As in the proof of that proposition, we may and will assume that
$d_E>\ord_E(\fra^s\cdot\frm_K^r)$, so that for all $d\geq d_E$ we have 
$\lct(\fra^s\cdot\frm_K^r)=\lct((\fra+\frm_K^d)^s\cdot \frm_K^r)$,
and $E$ computes both log canonical thresholds.

By Proposition~\ref{prop:gen-limit:lct-E}, there is an infinite set $I_d^E \subseteq \Z_+$
such that for every $i\in I_d^E$ we have 
$\lct((\fra+\frm_K^d)^s\cdot\frm_K^r) = \lct((\fra_i+ \frm^d)^s\cdot\frm^r)$, and moreover, there is
a divisor $E_i$ over 
${\rm Spec}\left(k\llbracket x_1,\ldots,x_n\rrbracket\right)$ computing
$\lct((\fra_i+\frm^d)^s\cdot\frm^r)$, and such that
$$\ord_{E_i}((\fra_i+\frm^d)^s\cdot\frm^r)=\ord_E((\fra+\frm_K^d)^s\cdot\frm_K^r)=
\ord_E(\fra^s\cdot\frm_K^r).$$
Since $E_i$ is a divisor computing $\lct((\fra_i+\frm^d)^s\cdot\frm^r)$, its center is equal to the closed point.
Furthermore, by our condition on $d$ we have
$$\ord_{E_i}(\frm^d)\geq d>\ord_E(\fra^s\cdot\frm^r)=\ord_{E_i}((\fra_i+\frm^d)^s\cdot\frm^r),$$
hence Corollary~\ref{formal_case}
gives for every $i\in I_d^E$
$$\lct(\fra_i^s\cdot\frm^r)=\lct((\fra_i+\frm^d)^s\cdot\frm^r)=\lct((\fra+\frm_K^d)^s\cdot\frm_K^r)=\lct(\fra^s\cdot\frm_K^r).$$

It follows from the above discussion that $(c_i)_{i \ge 1}$ cannot be a
strictly increasing sequence,
which proves that $\cT_n^{\rm sm}$ satisfies the ascending chain condition.
By exclusion, $(c_i)_{i \ge 1}$ has to be a strictly decreasing sequence.
Since the sequence $(\lct(\fra^s_i\cdot\frm^r))_{i \ge 1}$
has repeating terms, we deduce that $q > 0$. Equivalently,
$\lct(\fra)$ is not computed by any divisor with center at the closed point.
Therefore, if $F$ is a divisor over $\Spec(K\llbracket x_1,\dots,x_n\rrbracket)$
computing $\lct(\fra)$,
then the center of $F$ in $\Spec(K\llbracket x_1,\dots,x_n\rrbracket)$
is positive dimensional,
and hence, after localizing at its generic point,
we see that $\lct(\fra) \in \cT_{n-1}^{\rm sm}$
(cf. \cite[Propositions~2.11 and 3.1]{dFM}).
As it is easy and well-known that, conversely, every element in
$\cT_{n-1}^{\rm sm}$ is an accumulation point of $\cT_n^{\rm sm}$,
we conclude that $\cT_{n-1}^{\rm sm}$ is equal to
the set of accumulation points of $\cT_n^{\rm sm}$.
\end{proof}

The following proposition allows us to reduce log canonical thresholds on
varieties with quotient singularities to log canonical thresholds on smooth
varieties. We say that a variety $X$ has
{\it quotient singularities} at $p\in X$ if there is a smooth variety $U$, a finite
group $G$ acting on
$U$, and a point $q\in V=U/G$ such that the two completions
$\widehat{\cO_{X,p}}$ and $\widehat{\cO_{V,q}}$ are isomorphic as $k$-algebras.
We say that $X$ has quotient singularities if it has quotient singularities at every point.

In the above definition,
one can assume that $U$ is an affine space and that the action of $G$
is linear. Furthermore, one can assume that $G$
acts with no fixed points in codimension one
(otherwise, we may replace $G$ by $G/H$ and $U$ by $U/H$, where $H$ is generated
by all pseudoreflections in $G$, and by Chevalley's theorem \cite{Chevalley}, the quotient
$U/H$ is again an affine space).
Using Artin's approximation results (see Corollary~2.6 in \cite{Artin}), it follows that
there is an \'{e}tale neighborhood of $p$ that is also an \'{e}tale neighborhood of $q$.
In other words, there is a variety $W$, a point $r\in W$, and \'{e}tale maps
$\phi\colon W\to X$ and $\psi\colon W\to V$, such that $p=\phi(r)$ and $q=\psi(r)$.
After replacing $\phi$ by the composition
$$
W\times_VU\to W\overset{\phi}\to X,
$$
we may assume that in fact we have an \'{e}tale map $U/G\to X$ containing $p$ in its image,
with $U$ smooth,
and such that $G$ acts on $U$ without fixed points in codimension one.
This reinterpretation of the definition of quotient singularities seems to be well-known
to experts, but we could not find an explicit reference in the literature.

\begin{proposition}\label{reduction_quotient}
Let $X$ be a variety with quotient singularities, and let $\fra$ be a proper
nonzero ideal on $X$.
For every $p$ in the zero-locus $V(\fra)$ of $\fra$, there is a smooth variety $U$,
a nonzero ideal
$\frb$ on $U$, and a point $q$ in $V(\frb)$ such that $\lct_p(X,\fra)=\lct_q(U,\frb)$.
\end{proposition}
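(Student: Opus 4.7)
The plan is to reduce the question to a direct comparison of log canonical thresholds across an étale Galois cover, using the étale-local description of quotient singularities recalled in the paragraph preceding the statement. First, I would produce a smooth variety $U$ (an affine space will do), a finite group $G$ acting linearly on $U$ without fixed points in codimension one, and an étale morphism $\eta\colon V\to X$ with $V=U/G$ whose image contains $p$; pick $q'\in V$ over $p$, let $\phi\colon U\to V$ be the quotient map, and choose $q\in U$ over $q'$. Set $\frc := \eta^{-1}\fra\cdot\cO_V$ and $\frb := \phi^{-1}\frc\cdot\cO_U$.

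Since $\eta$ is étale, the pullback of any log resolution of $\fra$ is a log resolution of $\frc$ with identical discrepancies and orders along corresponding divisors, so $\lct_p(X,\fra) = \lct_{q'}(V,\frc)$. The substance of the proof is then the equality
\[
\lct_{q'}(V,\frc) \;=\; \lct_q(U,\frb),
\]
which I would establish through a discrepancy comparison across the finite map $\phi$. A crucial input is that $\phi$ is étale in codimension one (because $G$ acts without fixed points in codimension one), so the ramification divisor of $\phi$ vanishes, $V$ is $\QQ$-Gorenstein, and $K_U = \phi^{*}K_V$.

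To carry out the comparison, take a $G$-equivariant log resolution $\mu\colon\widetilde U\to U$ of $\frb$, which exists in characteristic zero by averaging an ordinary log resolution. The quotient $\widetilde V := \widetilde U/G$ comes with an induced finite map $\phi'\colon\widetilde U\to\widetilde V$, again étale in codimension one, and a proper birational map $\overline\mu\colon\widetilde V\to V$. For any pair of divisors $\widetilde E$ on $\widetilde U$ and $E = \phi'(\widetilde E)$ on $\widetilde V$ with ramification index $e = e_{\widetilde E/E}$, the ramification formula applied to $\phi'$ combined with $K_U = \phi^{*}K_V$ gives $k_{\widetilde E} = e\cdot k_E + (e-1)$, while the identity $\frb\cdot\cO_{\widetilde U} = \phi'^{*}(\overline\mu^{-1}\frc\cdot\cO_{\widetilde V})$ yields $\ord_{\widetilde E}(\frb) = e\cdot\ord_E(\overline\mu^{*}\frc)$. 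Consequently,
\[
k_{\widetilde E} + 1 - t\cdot\ord_{\widetilde E}(\frb) \;=\; e\bigl(k_E + 1 - t\cdot\ord_E(\overline\mu^{*}\frc)\bigr),
\]
and since $e>0$, log canonicity along $\widetilde E$ of $(U,\frb^t)$ is equivalent to log canonicity along $E$ of $(V,\frc^t)$. Minimizing over the divisors appearing on $\widetilde U$, with center at $q$ and at $q'$ respectively, gives the desired equality of local log canonical thresholds.

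The main obstacle I expect is the discrepancy comparison on $\widetilde V$, which is not smooth but only has quotient singularities, together with the verification that every divisorial valuation needed to compute $\lct_{q'}(V,\frc)$ is captured on such a $\widetilde V$ obtained from a $G$-equivariant refinement of $\mu$. This is handled using the $\QQ$-Gorenstein property of $V$ (which makes log discrepancies well defined) and the existence of $G$-equivariant refinements of any given log resolution, so that any divisor over $V$ appears on an appropriate $\widetilde V$ by further blowing up $\mu$ equivariantly.
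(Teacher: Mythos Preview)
Your proposal is correct and follows essentially the same route as the paper: both reduce via the \'etale map $U/G\to X$ to a discrepancy comparison across the finite quotient $\phi\colon U\to U/G$, which is \'etale in codimension one, so that $K_U=\phi^{*}K_{U/G}$. The paper simply invokes this comparison as a black box (Proposition~5.20 in \cite{KM}), whereas you unpack it via $G$-equivariant log resolutions and the ramification formula; the concern you flag in your last paragraph is exactly the content of that citation.
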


\begin{proof}
Let us choose an \'{e}tale map $\phi\colon U/G\to X$ with $p\in {\rm Im}(\phi)$, where
$U$ is a smooth variety, and $G$ is a finite group acting on $U$
without fixed points in codimension one.
Let $\widetilde{\phi}\colon U\to X$ denote the composition of $\phi$ with the quotient map.
Since $G$ acts without fixed points in codimension one, $\widetilde{\phi}$ is
\'{e}tale in codimension one, hence $K_U=\widetilde{\phi}^*(K_X)$. It follows from
Proposition~5.20 in \cite{KM} that if $\frb=\fra\cdot\cO_U$, then the pair $(X,\fra^t)$
is log canonical if and only if the pair
$(U,\frb^t)$ is log canonical (actually the result in \emph{loc. cit.} only covers
the case when $\fra$
is locally principal, but one can easily reduce to this case, by taking a suitable
product of general
linear combinations of the local generators of $\fra$). We conclude that there is a
point $q\in
V(\frb)$ such that $\lct_p(X,\fra)=\lct_q(U,\frb)$.
\end{proof}

It follows that $\cT_n^{\rm quot} = \cT_n^{\rm sm}$ for every $n$,
and therefore we deduce by Theorem~\ref{thm:T_n^sm} that
Shokurov's ACC Conjecture and Koll\'ar's Accumulation Conjecture
hold for log canonical thresholds on varieties with quotient singularities.

\begin{corollary}\label{quotient}
For every $n$, the set $\cT_n^{\rm quot}$ satisfies the ascending chain condition
and its set of accumulation points is equal to $\cT_{n-1}^{\rm quot}$.
\end{corollary}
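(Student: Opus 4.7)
The plan is to deduce the corollary directly from Theorem~\ref{thm:T_n^sm} by proving the identity $\cT_n^{\rm quot} = \cT_n^{\rm sm}$. Once this is established, both the ACC property and the description of accumulation points transfer verbatim, and Corollary~\ref{quotient} follows with no further work.

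The inclusion $\cT_n^{\rm sm} \subseteq \cT_n^{\rm quot}$ is immediate, since every smooth variety has (trivial) quotient singularities. For the reverse inclusion, I would start from an arbitrary element $c = \lct(\fra) \in \cT_n^{\rm quot}$, where $X$ has quotient singularities, $\dim X = n$, and $\fra \subsetneq \cO_X$. First I would locate a point where the global log canonical threshold is realized locally: by choosing any divisor $E$ over $X$ that computes $\lct(\fra)$ and letting $p \in X$ be any closed point in its center, one has $\lct_p(X,\fra) = \lct(\fra) = c$.

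Next, I would apply Proposition~\ref{reduction_quotient} to obtain a smooth variety $U$ of dimension $n$, a nonzero ideal $\frb \subseteq \cO_U$, and a point $q \in V(\frb)$ with
$$
\lct_q(U,\frb) \;=\; \lct_p(X,\fra) \;=\; c.
$$
To convert this local invariant on $U$ into a genuine (global) log canonical threshold, I would then shrink $U$ to a sufficiently small open neighborhood $U'$ of $q$: fixing a log resolution of $\frb$, one removes from $U$ the images of all prime divisors on that resolution whose centers do not contain $q$. On the resulting open set $U'$, the formula~\eqref{eq0} applied to the restricted resolution shows that $\lct(U',\frb\vert_{U'}) = \lct_q(U,\frb) = c$; moreover $U'$ remains smooth of dimension $n$, and $\frb\vert_{U'} \subsetneq \cO_{U'}$ since $q \in V(\frb\vert_{U'})$. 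Hence $c \in \cT_n^{\rm sm}$, which gives $\cT_n^{\rm quot} \subseteq \cT_n^{\rm sm}$.

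There is really no obstacle of substance here: Proposition~\ref{reduction_quotient} has already done the geometric work of trading the quotient singularity for a smooth ambient variety while preserving both the dimension and the local log canonical threshold, and the only care needed is the passage between local and global log canonical thresholds via shrinking. Having shown $\cT_n^{\rm quot} = \cT_n^{\rm sm}$, Theorem~\ref{thm:T_n^sm} yields at once that $\cT_n^{\rm quot}$ satisfies the ascending chain condition and that its set of accumulation points is $\cT_{n-1}^{\rm sm} = \cT_{n-1}^{\rm quot}$, completing the proof.
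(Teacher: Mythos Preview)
Your proposal is correct and follows exactly the paper's approach: establish $\cT_n^{\rm quot} = \cT_n^{\rm sm}$ via Proposition~\ref{reduction_quotient} and then invoke Theorem~\ref{thm:T_n^sm}. You spell out the local-to-global passage that the paper leaves implicit; note only that the fact $\dim U = n$ is not stated in Proposition~\ref{reduction_quotient} but is clear from its proof (since $U/G \to X$ is \'etale and $G$ is finite).
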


\begin{remark}\label{usual_definition}
At least over the complex numbers, one usually says
that $X$ has quotient singularities at $p$
if the germ of analytic space $(X,x)$ is isomorphic to
$M/G$, where $M$ is a complex manifold,
and $G$ is a finite group acting on $M$. It is not hard to check that in this
context this definition is equivalent with the one we gave above.
\end{remark}

\section{Log canonical thresholds on l.c.i. varieties}

In this section we prove that the ACC Conjecture
holds for log canonical thresholds (and mixed log canonical thresholds)
on l.c.i. varieties, and prove Theorem~\ref{thm:intro:M_n^lci}.
We start with the case of mixed log canonical thresholds on smooth varieties.

\begin{theorem}\label{thm:M_n^sm}
For every $n$, the set $\cM_n^{\rm sm}$ satisfies the ascending chain condition.
\end{theorem}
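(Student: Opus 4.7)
The plan is to run the generic-limit and effective $\frm$-adic semicontinuity strategy of the proof of Theorem~\ref{thm:T_n^sm}, now applied to a generic limit of \emph{pairs} of ideals. Suppose, for contradiction, that $(c_i)_{i\geq 1}$ is a strictly increasing sequence in $\cM_n^{\rm sm}$ with $\lim c_i = c$. Using the description of $\cM_n^{\rm sm}$ recalled in Section~2, write $c_i=\lct_{(\mathbf{A}^n,\widetilde{\frb}_i),0}(\widetilde{\fra}_i)$ for ideals $\widetilde{\fra}_i,\widetilde{\frb}_i\subseteq k[x_1,\dots,x_n]$ with $\widetilde{\fra}_i\subseteq(x_1,\dots,x_n)$ and $\lct_0(\widetilde{\frb}_i)\geq 1$; pass to $R=k\llbracket x_1,\dots,x_n\rrbracket$ via Remark~\ref{remark3}, setting $\fra_i:=\widetilde{\fra}_iR$ and $\frb_i:=\widetilde{\frb}_iR$, and construct a generic limit $(\fra,\frb)\subseteq R_K$ of $(\fra_i,\frb_i)_{i\geq 1}$ as in Section~\ref{sect:gen-limits}. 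Applying Corollary~\ref{cor:lct=lim} with $(p,q)=(0,1)$ gives $\lct(\frb)\geq 1$, so $c^*:=\lct_\frb(\fra)$ is defined. To see that $c^*\geq c$, fix any rational $t=p/q<c$; for $i$ large we have $c_i>t$, hence $\lct(\frb_i\fra_i^{t})\geq 1$, i.e.\ $\lct(\frb_i^q\fra_i^p)\geq 1/q$. Picking $i_d\in I_d^\o$ with $c_{i_d}>t$ (possible since $I_d^\o$ is infinite) and applying Corollary~\ref{cor:lct=lim} yields $\lct(\frb^q\fra^p)\geq 1/q$, equivalent to $\lct(\frb\fra^t)\geq 1$, so $c^*\geq t$. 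Letting $t\uparrow c$ gives $c^*\geq c$.

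Since $c^*$ is a mixed log canonical threshold it is rational; writing $c^*=p/q$ gives $\lct(\frb^q\fra^p)=1/q$ by Remark~\ref{rem1}. Applying the preliminary lemma at the start of Section~5 to the ideal $\frb^q\fra^p\subseteq R_K$ produces nonnegative integers $v,w$ such that
\[
\lct\!\bigl(\frb^{qw}\fra^{pw}\frm_K^v\bigr)=\tfrac{1}{qw},
\]
and this log canonical threshold is computed by a divisor $E$ with center at the closed point of $\Spec(R_K)$. Enlarging the construction to a joint generic limit of the three sequences $(\fra_i,\frb_i,\frm)_{i\geq 1}$ (as in the remark at the end of Section~\ref{sect:gen-limits}), choosing $d\gg0$ larger than $d_E$ and $\ord_E(\frb^{qw}\fra^{pw}\frm_K^v)$, and invoking the obvious three-sequence analogue of Proposition~\ref{prop:gen-limit:lct-E} at exponents $(pw,qw,v)$ together with Corollary~\ref{formal_case}, exactly as in the final step of the proof of Theorem~\ref{thm:T_n^sm}, I would conclude that for infinitely many indices $i$,
\[
\lct\!\bigl(\frb_i^{qw}\fra_i^{pw}\frm^v\bigr)=\tfrac{1}{qw}.
\]

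This contradicts the upper bound
\[
\lct\!\bigl(\frb_i^{qw}\fra_i^{pw}\frm^v\bigr)
\;\leq\;\lct\!\bigl((\frb_i\fra_i^{p/q})^{qw}\bigr)
\;=\;\tfrac{1}{qw}\,\lct(\frb_i\fra_i^{p/q})
\;<\;\tfrac{1}{qw},
\]
where the strict inequality uses $p/q=c^*\geq c>c_i$, so that $(X,\frb_i\fra_i^{p/q})$ fails log canonicity. I expect the main technical point is the $\frm$-adic semicontinuity step: one must verify that the combination of Proposition~\ref{prop:gen-limit:lct-E} and Corollary~\ref{formal_case} transfers cleanly from the single-ideal setting of Theorem~\ref{thm:T_n^sm} to the product ideal $\frb^{qw}\fra^{pw}\frm_K^v$ coming from a joint generic limit of three sequences; this should be routine in light of the remark at the end of Section~\ref{sect:gen-limits}.
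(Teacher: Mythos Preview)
Your argument is correct, but it follows a genuinely different route from the paper's. After establishing $c^*\geq c$ (which you do exactly as the paper does), the paper does \emph{not} re-run the effective $\frm$-adic semicontinuity machinery. Instead, writing $c^*=r/s$ and using Remark~\ref{rem1} to get $\lct(\frb^s\fra^r)=1/s$, the paper simply observes that the numbers $\lct(\frb_{j_d}^s\fra_{j_d}^r)$ lie in $\cT_n^{\rm sm}$ and, by Corollary~\ref{cor:lct=lim}, converge along a subsequence to $1/s$. The already-proven ACC for $\cT_n^{\rm sm}$ (Theorem~\ref{thm:T_n^sm}) then forces $\lct(\frb_{j_d}^s\fra_{j_d}^r)\geq 1/s$ for infinitely many $d$, giving $c_{j_d}=\lct_{\frb_{j_d}}(\fra_{j_d})\geq r/s=c^*\geq c>c_{j_d}$, a contradiction.

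Your approach, by contrast, re-derives the stability of the limit threshold from first principles: you apply the preliminary lemma of Section~5 to $\frb^q\fra^p$ in order to force a divisor with closed-point center, and then invoke the three-sequence version of Proposition~\ref{prop:gen-limit:lct-E} together with Corollary~\ref{formal_case} to transfer the exact value of the threshold back to infinitely many $(\fra_i,\frb_i)$. This is essentially the proof of Theorem~\ref{thm:T_n^sm}, replayed for the product ideal $\frb^q\fra^p$ in place of a single ideal. It works and has the virtue of being logically independent of Theorem~\ref{thm:T_n^sm}; the cost is that it is longer and relies on the three-sequence extension of the generic-limit results (routine, as you note). The paper's argument is shorter precisely because it packages all of the $\frm$-adic semicontinuity work into Theorem~\ref{thm:T_n^sm}, used as a black box.
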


\begin{proof}
Suppose that $\cM_n^{\rm sm}$ contains a strictly increasing sequence $(c_i)_{i \ge 1}$.
Let $c=\lim_{i \to \infty} c_i$ (which is finite, since $\cM_n^{\rm sm}$ is bounded above by $n$).
We can find ideals $\widetilde{\fra}_i,\,\widetilde{\frb}_i\subseteq k[x_1,\dots,x_n]$, 
 with $\widetilde{\fra}_i \subseteq (x_1,\ldots,x_n)$
and $\lct_0(\widetilde{\frb}_i) \ge 1$, such that $c_i=\lct_{({\mathbf A}^n,\widetilde{\frb}_i),0}
(\widetilde{\fra_i})$.
If $\fra_i$ and $\frb_i$ are the ideals generated by $\widetilde{\fra}_i$ and, respectively, 
$\widetilde{\frb}_i$ in $k\llbracket x_1,\ldots,x_n\rrbracket$, then $c_i=\lct_{\frb_i}(\fra_i)$
by Remark~\ref{remark3}. 
Consider a generic limit $(\fra,\frb)$ of the sequence $(\fra_i,\frb_i)_{i\geq 1}$, constructed as in
Section~\ref{sect:gen-limits}, with $\fra,\frb\subseteq K\llbracket x_1,\ldots,x_n\rrbracket$.
By Corollary~\ref{cor:lct=lim}, $\lct(\frb)$ is a limit point of the sequence 
$(\lct(\frb_i))_{i\geq 1}$, hence $\lct(\frb)\geq 1$.
Therefore $c':=\lct_\frb(\fra)$ is well defined.

Consider first any positive integers $p$ and $q$ such that $p/q<c$. By assumption, we have
$c_i>p/q$ for all $i\gg 1$. Let $X=\Spec\left(k\llbracket x_1,\ldots,x_n\rrbracket\right)$.
The pair $(X,\frb_i\.\fra_i^{p/q})$ is log canonical, hence
$\lct(\frb_i^q\.\fra_i^p)\geq 1/q$, for all $i\gg 1$.
It follows from Corollary~\ref{cor:lct=lim} that there
is a sequence $(i_d)_{d \ge 1}$ in $\Z_+$ such that
$$
\lct(\frb^q\.\fra^p) = \lim_{d \to \infty} \lct(\frb_{i_d}^q\.\fra_{i_d}^p).
$$
This implies in particular that $\lct(\frb^q\.\fra^p) \geq 1/q$, and
therefore $c'\geq p/q$. As this holds for every $p/q<c$,
we conclude that $c'\geq c$.

On the other hand, since $c'\in\QQ$, we may write $c'=r/s$
for positive integers $r$ and $s$. It follows from Remark~\ref{rem1}
that $\lct(\frb\.\fra^{r/s})=1$, and thus $\lct(\frb^s\.\fra^r)=1/s$.
Applying again Corollary~\ref{cor:lct=lim},
we find a sequence $(j_d)_{d \ge 1}$ in $\Z_+$ such that
$$
\lct(\frb^s\.\fra^r) = \lim_{d \to \infty} \lct(\frb_{j_d}^s\.\fra_{j_d}^r).
$$
The fact that $\cT_n^{\rm sm}$ satisfies
the ascending chain condition (cf. Theorem~\ref{thm:T_n^sm})
implies that there are infinitely many $d$ such that
$\lct(\frb_{j_d}^s\.\fra_{j_d}^r)\geq 1/s$, hence
$\lct_{\frb_{j_d}}(\fra_{j_d})\geq r/s$.
For any such $d$ we have
$$
c' \geq c > c_{j_d}\geq \frac{r}{s}=c',
$$
which is a contradiction.
\end{proof}

In order to extend the above result to the case
of ambient varieties with l.c.i. singularities,
we use the following application of
Inversion of Adjunction. This is the key tool that allows us to
replace mixed log canonical thresholds on locally complete intersection varieties
with the similar type of invariants on ambient smooth varieties.

\begin{proposition}\label{inversion}
Let $A$ be a smooth irreducible variety over $k$, and $X\subset A$
a closed subvariety of pure codimension $e$, that is normal and
locally a complete intersection.
Suppose that $\frb$ and $\fra$ are ideals on $A$, with $\fra\neq\cO_A$, and such that
$X$ is not contained in the union of the zero-loci of $\frb$ and $\fra$.
\begin{enumerate}
\item[i)]
The pair $(X,\frb\vert_X)$ is log canonical if and only if
for some open neighborhood $U$ of $X$,  the pair
$(U,\frb\cdot \frp^e\vert_U)$ is log canonical, where
$\frp$ is the ideal defining $X$ in $A$.
\item[ii)]
If $(X,\frb\vert_X)$ is log canonical, and if $X$
intersects the zero-locus  of $\fra$, then for some open neighborhood
$V$ of $X$ we have
$$
\lct_{\frb\vert_X}(X,\fra\vert_X)=\lct_{\frb\vert_V\cdot \frp^e\vert_V} (V, \fra\vert_V).
$$
\end{enumerate}
\end{proposition}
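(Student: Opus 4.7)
The plan is to derive both statements from the Inversion of Adjunction for locally complete intersection subvarieties established in \cite{EM3}, which can be stated as follows: if $X \subseteq A$ is a normal l.c.i.~subvariety of pure codimension $e$ with ideal sheaf $\frp$, and $\frc \subseteq \cO_A$ is an ideal with $X \not\subseteq V(\frc)$, then for every $p \in X$ the pair $(X, \frc|_X)$ is log canonical at $p$ if and only if $(A, \frp^e \cdot \frc)$ is log canonical at $p$.

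For part (i), I apply this directly with $\frc = \frb$. If $(X, \frb|_X)$ is log canonical, then $(A, \frp^e \cdot \frb)$ is log canonical at every point of $X$; since the non-log-canonical locus of a pair on a smooth variety is closed in $A$, its complement is an open neighborhood $U$ of $X$ on which $(U, \frb \cdot \frp^e|_U)$ is log canonical. The converse is immediate from the pointwise equivalence.

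For part (ii), set $c_1 := \lct_{\frb|_X}(X, \fra|_X)$, which is a rational number by \eqref{eq0}, and observe that $(X, \frb|_X \cdot \fra|_X^{c_1})$ is log canonical. Applying part (i) with $\frb$ replaced by $\frb \cdot \fra^{c_1}$, there is an open neighborhood $V$ of $X$ in $A$ on which $(V, \frb \cdot \fra^{c_1} \cdot \frp^e|_V)$ is log canonical, and hence $\lct_{\frb|_V \cdot \frp^e|_V}(V, \fra|_V) \geq c_1$. For the reverse inequality, fix any rational $t > c_1$; by definition of $c_1$ the pair $(X, \frb|_X \cdot \fra|_X^t)$ fails to be log canonical at some point $p \in X \cap V(\fra)$, and by the pointwise inversion of adjunction the pair $(A, \frb \cdot \fra^t \cdot \frp^e)$ also fails log canonicity at $p$. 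Since $p \in V$, this forces $\lct_{\frb|_V \cdot \frp^e|_V}(V, \fra|_V) \leq t$, and letting $t \downarrow c_1$ gives the claimed equality.

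The main technical obstacle is choosing a single open neighborhood $V$ that simultaneously witnesses the log canonicity at the critical exponent $c_1$ and detects the failure for every $t > c_1$. The first is handled by applying part (i) precisely at $t = c_1$, exploiting the rationality of $c_1$ and the closedness of the log canonical condition in the exponent; the second is automatic, because any non-log-canonical point already lies in $X$ and hence in every neighborhood of $X$. Consequently no further shrinking of $V$ is required.
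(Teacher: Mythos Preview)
Your proof is correct and follows essentially the same approach as the paper: both deduce parts (i) and (ii) directly from Inversion of Adjunction for l.c.i.\ subvarieties (Corollary~3.2 in \cite{EM3}). The paper's proof is a single sentence recording that the cited result gives, for every $q\ge 0$, the equivalence between log canonicity of $(X,(\frb\cdot\fra^q)\vert_X)$ and of $(A,\frb\cdot\fra^q\cdot\frp^e)$ near $X$; you simply unpack this into the two inequalities for part (ii), so there is no substantive difference.
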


\begin{proof}
Both assertions follow from Inversion of Adjunction (see Corollary~3.2 in \cite{EM3}), as
this says that
for every nonnegative $q$, the pair $(X,(\frb\.\fra^q)\vert_X)$ is log canonical
if and only if the pair $(A,\frb\.\fra^q\.\frp^e)$ is log canonical in some neighborhood of
$X$.
\end{proof}

The next fact, which must be well-known to the experts,
allows us to control the dimension of the ambient variety in the process of
replacing a mixed log canonical threshold on an l.c.i. variety by one on a smooth variety.
Given a closed point $x\in X$,
we denote by $T_xX$ the Zariski tangent space of $X$ at $x$.

\begin{proposition}\label{bound}
Let $X$ be a locally complete intersection variety.
If $X$ is log canonical, then $\dim_kT_xX\leq 2\dim X$ for every $x\in X$.
\end{proposition}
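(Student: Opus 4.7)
The plan is to exploit Inversion of Adjunction (in the form of Proposition~\ref{inversion}) applied to a \emph{minimal} local embedding of $X$ into a smooth variety. Fix $x \in X$, and after restricting to an affine neighborhood of $x$, choose a closed immersion $X \hookrightarrow A$ with $A$ smooth of dimension $N := \dim_k T_x X$, such that the tangent space of $X$ at $x$ equals the tangent space of $A$ at $x$. Since $X$ is l.c.i.\ of dimension $\dim X$ in every smooth embedding, the ideal sheaf $\frp$ of $X$ in $A$ is generated, locally at $x$, by a regular sequence of length $e := N - \dim X$. Moreover, by the minimality of the embedding, the image of $\frp$ in the cotangent space $\frm_x/\frm_x^2$ vanishes; equivalently, $\frp \cdot \cO_{A,x} \subseteq \frm_x^2$, where $\frm_x$ denotes the maximal ideal of $x$ in $A$.

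Granted this setup, the hypothesis that $X$ is log canonical means that the pair $(X,\cO_X)$ is log canonical, so Proposition~\ref{inversion}(i) applied with $\frb = \cO_A$ yields that $(A,\frp^e)$ is log canonical in a neighborhood of $X$; in particular $\lct_x(\frp^e) \ge 1$. Combining the inclusion $\frp^e \subseteq \frm_x^{2e}$ (valid locally at $x$) with the monotonicity of the log canonical threshold under inclusion of ideals and the standard computation $\lct_x(\frm_x^{2e}) = N/(2e)$ on the smooth variety $A$ of dimension $N$, one obtains
\[
1 \;\le\; \lct_x(\frp^e) \;\le\; \lct_x(\frm_x^{2e}) \;=\; \frac{N}{2e}.
\]
Rearranging gives $e \le \dim X$, which is equivalent to $\dim_k T_x X = N \le 2\dim X$.

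The only point that requires some care is the construction of the minimal local embedding together with the preservation of the l.c.i.\ property in it. This can be arranged by starting from any local embedding of a neighborhood of $x$ into an affine space, and then successively projecting out those coordinate functions whose images span a complement of $\frm_x^2 + \frp$ in $\frm_x$: this process terminates with an embedding into a smooth variety of dimension exactly $\dim_k T_x X$, and it preserves both the codimension of $X$ and the property of being cut out by a regular sequence at $x$. Once this reduction is in place, the rest of the argument is essentially mechanical; the real content is supplied by Inversion of Adjunction, which is what transfers log canonicity from $X$ to the ideal $\frp^e$ on the ambient smooth variety.
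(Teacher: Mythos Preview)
Your argument is correct and is essentially the same as the paper's. Both proofs embed $X$ minimally into a smooth $A$ of dimension $N=\dim_kT_xX$, invoke Inversion of Adjunction to get that $(A,\frp^e)$ is log canonical near $x$, and then use that $\frp\subseteq\frm_x^2$ to conclude $N\ge 2e$. The only cosmetic difference is that the paper tests log canonicity directly against the exceptional divisor $E$ of the blow-up at $x$ (writing $N=1+\ord_E(K_{A'/A})\ge e\cdot\ord_E(\frp)\ge 2e$), while you package the same divisorial computation as the monotonicity inequality $1\le\lct_x(\frp^e)\le\lct_x(\frm_x^{2e})=N/(2e)$; the divisor computing $\lct_x(\frm_x^{2e})$ is precisely this $E$. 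One tiny point: your displayed chain implicitly assumes $e\ge 1$; the case $e=0$ (i.e.\ $x$ smooth on $X$) is trivial and should be set aside, as the paper does.
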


\begin{proof}
Fix $x\in X$, and let $N=\dim\,T_xX$. After possibly replacing $X$ by an
open neighborhood of $x$,
we may assume that we have a closed embedding of $X$ in a smooth irreducible variety $A$,
of codimension $e$, with
$\dim A=N$. If $X=A$, then $N=\dim X$ and we are done.

Suppose now that $e\geq 1$.
Since $X$ is locally a complete intersection, it follows from Inversion of Adjunction
(see Corollary~3.2 in \cite{EM3}) that the pair $(A,\frp^e)$ is log canonical,
where $\frp$ is the ideal
of $X$ in $A$. In particular, if $E$ is the exceptional divisor of
the blow-up $A'$ of $A$ at $x$, and ${\rm ord}_E$ is the corresponding valuation, then
we have
$$
N=1+\ord_E(K_{A'/A})\geq e\cdot \ord_E(\frp)\geq 2e=2(N-\dim X).
$$
This gives $N\leq 2\dim X$.
\end{proof}

We are now ready to prove Theorem~\ref{thm:intro:M_n^lci},
and hence Corollary~\ref{cor:intro:T_n^lci}.

\begin{proof}[Proof of Theorem~\ref{thm:intro:M_n^lci}]
By Theorem~\ref{thm:M_n^sm}, we know that $\cM_n^{\rm sm}$ satisfies
the ascending chain condition for every $n$.
Then it is clear that in order to prove that $\cM_n^{\rm l.c.i.}$
also satisfies the ascending chain condition for every $n$, it suffices to show that
$$
\cM_n^{\rm l.c.i.}\subseteq\cM_{2n}^{\rm sm}.
$$
Suppose that $(X,\frb)$ is log canonical, with $X$ locally a complete
intersection of dimension $ n$, and let $c=\lct_{\frb}(\fra)$.
Let $x\in X$ be any point in the center of a divisor computing $\lct_{\frb}(\fra)$.
For every open neighborhood $U$ of $x$ we have $\lct_{\frb\vert_U}(U,\fra\vert_U)=c$.
Since $X$ is log canonical, it follows from
Proposition~\ref{bound} that $\dim_kT_xX\leq 2n$.
After replacing $X$ by a suitable neighborhood of $x$,
we may assume that there is
a closed embedding $X\hookrightarrow A$, where $A$ is a smooth variety of
dimension $2n$. Proposition~\ref{inversion} implies that after possibly
replacing $A$ by a
neighborhood of $X$, we have $c=\lct_{\frb_1\cdot\frp^{e}}(\fra_1)$, where $\frp$
is the ideal defining $X$ in $A$, $e$ is the codimension of $X$ in $A$, and $\frb_1$
and $\fra_1$ are ideals in $A$
whose restrictions to $X$ give, respectively, $\frb$ and $\fra$.
Thus $c \in \cM_{2n}^{\rm sm}$.
\end{proof}

\begin{proof}[Proof of Corollary~\ref{cor:intro:T_n^lci}]
It follows by Theorem~\ref{thm:intro:M_n^lci}, since
$\cT_n^{\rm l.c.i} \subseteq \cM_n^{\rm l.c.i}$.
\end{proof}

\providecommand{\bysame}{\leavevmode \hbox \o3em
{\hrulefill}\thinspace}

\end{document}